\documentclass {siamltex}

\usepackage[english]{babel}
\usepackage{amsfonts}
\usepackage{amsmath}
\usepackage{amssymb,amsbsy}
\usepackage{amscd}
\usepackage{latexsym}
\usepackage{float}
\usepackage{graphicx}
\usepackage{setspace}
\usepackage{fancyhdr}
\usepackage{color}
\usepackage{stmaryrd,algorithm}




\newcommand{\jump}[1]{\llbracket #1 \rrbracket }

%
%

%
%

%
%




\usepackage{subfigure}

\usepackage{color}
\definecolor{myorange}{rgb}{0.9568,0.4941,0.1961}
\definecolor{myred}{rgb}{0.9098,0.1294,0.2078}
\definecolor{myblue}{rgb}{0.0352,0.4981,0.6509}
\definecolor{myhyperblue}{rgb}{0.1607,0.3922,0.9}
\definecolor{mygreen}{rgb}{0.2235,0.6353,0.2588}
\definecolor{mygrey}{rgb}{0.3,0.3,0.3}


\graphicspath{{Figures/}}
\title{The penalty free Nitsche method and nonconforming finite
  elements for the Signorini problem
}

\author{Erik Burman\thanks{Department of Mathematics, 
University College London, Gower Street, London, 
UK--WC1E  6BT, 
United Kingdom; ({\tt e.burman@ucl.ac.uk})}
\and Peter Hansbo\thanks{Department of Mechanical Engineering, J\"onk\"oping University,
SE-55111 J\"onk\"oping, Sweden; ({\tt peter.hansbo@ju.se})}
\and Mats G. Larson \thanks{Department of Mathematics and Mathematical Statistics, Ume{\aa} University, 
SE-901 87 Ume{\aa}, Sweden; ({\tt mats.larson@math.umu.se})}
}
 
\begin{document}

\maketitle

\begin{abstract}
We design and analyse a Nitsche method for contact problems. Compared
to the seminal work of Chouly and Hild \cite{CH13b} ({\emph{A
    Nitsche-based method for unilateral contact problems: numerical
    analysis}}. SIAM J. Numer. Anal. 51 (2013), no. 2) our method is constructed
by expressing the contact conditions in a nonlinear function for the
displacement variable instead of the lateral forces. The contact
condition is then imposed using the nonsymmetric variant of Nitsche's
method that does not require a penalty term for
stability. Nonconforming piecewise affine 
elements are considered for the bulk discretization. We prove optimal
error estimates in the energy norm.
\end{abstract}



\section{Introduction}
We consider the Signorini problem, find $u$ such that
\begin{equation}\label{signorini}
\begin{array}{rcl}
-\Delta u &=& f \mbox{ in } \Omega \\
u &= & 0  \mbox{ on } \Gamma_D \\
\partial_n u &=& 0 \mbox{ on } \Gamma_N \\
u \leq 0,\; \partial_n u \leq 0, \; u \partial_n u &=& 0 \mbox{ on } \Gamma_C,
\end{array}
\end{equation}
where $f \in L^2(\Omega)$ and $\Omega \subset \mathbb{R}^d$, $d=2,3$
is a convex polygonal (polyhedral) domain with boundary $\partial \Omega$
and $\Gamma_D \cup \Gamma_N \cup \Gamma_C = \partial \Omega$. We assume that
$\Gamma_C$ coincides with one of the sides of the polygon. We write
$\partial_n u := n \cdot \nabla u$, where $n$ denotes the outwards
pointing normal of $\partial \Omega$. 

It is well known that this problem admits a unique solution $u \in
H^1(\Omega)$. This follows from the theory of Stampacchia applied to
the corresponding variational inequality (see for instance \cite{HHN96}). We will also assume the
additional regularity $u \in H^{\frac32+\nu}(\Omega)$ $0 < \nu \leq
\frac12$. There exists a large body of litterature treating finite
element methods for contact
problems.  In
general however, it has proven difficult to prove optimal error
estimates without making assumptions on the regularity of
the exact solution or the constact zone. In the pioneering work of Scarpini and Vivaldi
\cite{SV77} $O(h^{\frac34})$ convergence was proved in the energy norm
for solutions in $H^2(\Omega)$. Brezzi, Hager and Raviart
\cite{BHR77} then proved $O(h)$ convergence under the additional condition
that the solution was in $W^{1,\infty}(\Omega)$ or that the number of
  points where the contact condition changes from binding to
  non-binding is finite. These initial works were
followed by a series of papers where the scope was widened and sharper
estimates obtained
\cite{KO88,GleT89,BB00,BBB03,BBR03,Wriggers2007,WPGW12,CH13a}. Discretization of \eqref{signorini} is usually performed on
the variational inequality or using a penalty method. The first case however leads to some nontrivial
choices in the construction of the discretization spaces in order to
satisfy the nonpenetration condition and it has proved difficult to
obtain optimal error estimates \cite{HR12}. The latter case, leads to the usual
consistency and conditioning issues of penalty methods. A detailed analysis
the penalty method was recently performed by Chouly and Hild \cite{CH13a}. Another approach proposed by Hild and
Renard \cite{HR10} is to use a
stabilized Lagrange-multiplier in the spirit of Barbosa and Hughes \cite{BH92},
using the reformulation of the contact condition
\begin{equation}\label{cond1}
\partial_n u = - \gamma^{-1} [u - \gamma \partial_n u ]_+
\end{equation}
where $[x]_\pm = \pm \max(0,\pm x)$, proposed by Alart and Curnier \cite{AC91} in an augmented Lagrangian
framework.
Using the close relationship between the Barbosa and Hughes method and
Nitsche's method \cite{Nit71} as discussed by Stenberg \cite{Sten95}, this method was
then further developed in the elegant Nitsche type formulation
introduced by Chouly, Hild and Renard 
\cite{CH13b,CHR15}. In these works
optimal error estimates for solutions in $H^{\frac32+\nu}(\Omega)$ to
the above model problem were obtained for
the first time. Their method was proposed in a nonsymmetric and a
symmetric version similar to Nitsche's method for the imposition of
boundary conditions; it has however been observed that in their framework,
there was no equivalent to the penalty--free non--symmetric Nitsche
method proposed in \cite{Bu12}. Our aim in this work is to fill this
gap, rather adding a piece to the puzzle than pretending to
propose a method superior to the previous variants.

The penalty free Nitsche method can be interpreted as a Lagrange
multiplier method where the multiplier and the corresponding
test function has been replaced by the normal flux of the solution
variable and of its test function, respectively. To design this method
for contact problems we take a slightly different approach than in \cite{CH13b}.
Instead of working on the formulation \eqref{cond1} for the
lateral forces we use a similar relation on the displacement:
\begin{equation}\label{cond2}
u = - \gamma [\partial_n u  - \gamma^{-1} u]_+.
\end{equation}
Setting $P_{\gamma}(u) = \gamma \partial_n u - u$
we may write this relation as
\begin{equation}\label{eq:condonu}
u = - [P_{\gamma}(u)]_+.
\end{equation}
It is straightforward to show that this is equivalent to the contact
condition of equation \eqref{signorini}. First assume that
\eqref{cond2} holds. Then by construction $u \leq 0$. For $u=0$ we see
that $[\partial_n u]_+ = 0$ so in this case $\partial_n u \leq 0$. On
the other hand if $u \ne 0$ and 
$\partial_n u > 0$ then $u = - \gamma (\partial_n u  -
\gamma^{-1} u) < u$, which is a contradiction. Similarly if 
$\partial_n u < 0$ and $u \ne 0$ then $u = - \gamma (\partial_n u  -
\gamma^{-1} u)> u$. On the other hand if $u \leq 0$ and $u \partial_n u
 = 0$ then \eqref{cond2} holds and similarly if $\partial_n u \leq 0$ and $u\partial_n u
 = 0$ then \eqref{cond2} holds.

We multiply \eqref{signorini}
by a function $v$ with zero trace on $\Gamma_D$ and apply Green's
formula to obtain
\[
a(u,v)- \left< \partial_n u ,
  v\right>_{\Gamma_C} = (f,v)_{\Omega},
\]
where $(\cdot,\cdot)_{\Omega}$ and $\left< \cdot,\cdot\right>_{\Gamma_C}$ denote the $L^2$-scalar product on
$\Omega$ and $\Gamma_C$ respectively and
$a(u,v) :=(\nabla u,\nabla v)_{\Omega}$.
We then add  a term imposing \eqref{cond1} on the following form
\begin{equation}\label{full_coupling_form}
 \left<u  + \gamma [\partial_n u  - \gamma^{-1} u]_+, \theta_1 \partial_n v
   + \theta_2 \gamma^{-1} v\right>_{\Gamma_C},
\end{equation}
resulting in family of Nitsche formulations defined by two parameters
$\theta_1$ and $\theta_2$,
\[\begin{array}{c}
a(u,v)- \left< \partial_n u ,
  v\right>_{\Gamma_C} +  \theta_1\left< \partial_n
  v,u\right>_{\Gamma_C} + \theta_2\gamma^{-1}
\left<u,v\right>_{\Gamma_C}  \\[3mm]
+ \left<\gamma [\partial_n u  - \gamma^{-1} u]_+, \theta_1 \partial_n v
   + \theta_2 \gamma^{-1} v\right>_{\Gamma_C}= (f,v)_{\Omega}.
\end{array}\]
Taking $\theta_1 \in
\{-1,0,1\}$ and $\theta_2 =1$ results in methods equivalent to those
proposed in \cite{CHR15} on the form
\begin{equation}\label{formal_Nit_contact}\begin{array}{c}
(\nabla u,\nabla v)_{\Omega} - \left< \partial_n u ,
  v\right>_{\Gamma_C} \pm  \left< 
  u, \partial_n v\right>_{\Gamma_C} + \left< \gamma^{-1} u,
  v\right>_{\Gamma_C}\\[3mm]
+ \left< [P_{\gamma}(u)]_+, \gamma^{-1} v \pm \partial_n v
   \right>_{\Gamma_C} = (f,v)_\Omega
\end{array}
\end{equation}
from which we deduce that the linear part of the formulation coincides
with the classical version of Nitsche's method. It is straightforward
to verify that \eqref{formal_Nit_contact} is equivalent with the
formulation proposed in \cite{CHR15}.

Herein we will consider the method obtained
when $\theta_1=1$ and $\theta_2 = 0$ in which case the term imposing
the contact condition reduces to
\[
 \left<u  + \gamma [\partial_n u  - \gamma^{-1} u]_+, \partial_n v
 \right>_{\Gamma_C} .
\]
Observe that the two terms only differ by
the exclusion of the last term which corresponds to a penalty
and in that sense the latter variant is penalty free.

It follows that the penalty free version leads to the following formal restatement of \eqref{signorini} for
smooth $u$
\begin{equation}\label{formal_var_contact}
(\nabla u,\nabla v)_{\Omega} - \left< \partial_n u ,
  v\right>_{\Gamma_C} +  \left< 
  u, \partial_n v\right>_{\Gamma_C} 
+ \left< [P_{\gamma}(u)]_+, \partial_n v
   \right>_{\Gamma_C} = (f,v)_\Omega.
\end{equation}
Observe that the linear part of the system is equivalent to that
proposed in \cite{Bu12} for Dirichlet boundary conditions, but that
here this is used to enforce the condition \eqref{eq:condonu} on $u$.

For the discretization of \eqref{formal_var_contact} we will use the
Crouzeix--Raviart nonconforming piewewise affine element with midpoint
continuity on element edges (or continuity of averages over faces in
three dimensions). As we shall see below, this element is
advantageous for the formulation proposed, since the necessary
stability results are relatively straightforward to prove. The
nonconforming finite element space has been analysed for the Signorini
problem by Hua and Wang \cite{HW07}. They prove optimal convergence up to a
logarithmic factor for $H^2(\Omega)$ solutions under the assumption
that the number of points where the constraint changes from binding
to nonbinding is finite. In this work we prove the same optimal
results for solutions in $H^{\frac32+\nu}(\Omega)$, $\nu>0$ as those
obtained in \cite{CH13b,CHR15}.
 
To handle the nonconformity error we need to make an additional mild
assumption on the source term: the trace of $f$ must be well
defined in the vicinity of the contact boundary $\Gamma_C$. To make
this precise, let
$$\Omega_{t_C}:= \{x \in \bar \Omega: x= y  - n_y t, \mbox{ where } y\in
\Gamma_C \mbox{ and } 0 \leq t \leq t_C\},
$$ 
where $n_y$ denotes the outward pointing normal on $\Gamma_C$ at the
point $y$. For a fixed $t$, we define 
$$
\partial_t
  \Omega := \{x \in \bar \Omega:  x= y  - n_y t, \mbox{ where } y\in
\Gamma_C \}.
$$
Observe that for any
function $v \in H^s(\Omega_{t_C})$ with $s>\tfrac12$ there holds
\begin{equation}\label{eq:tracebound}
\|v\|_{\Omega_C} \lesssim t_C^{\frac12} \sup_{0\leq t \leq t_C} \|v\|_{\partial_t
  \Omega}.
\end{equation}
We introduce the norm $\|u\|_{L^2_{\infty}(\Omega)} :=
\|u\|_{L^2(\Omega)} +  \sup_{0\leq t \leq t_C} \|v\|_{\partial_t
  \Omega}$ and assume that
\begin{equation}\label{f_assump}
\exists ~ t_C>0\mbox{ such that }
\|f\|_{L^2_{\infty}(\Omega)} < \infty.
\end{equation}
\section{The nonconforming finite element method}
To simplify the analysis below we will work with the nonconforming
finite element space proposed by Crouzeix and Raviart in \cite{CR73}. 
Let $\{\mathcal{T}_h\}_h$ denote a family of shape regular and quasi uniform
tessellations of $\Omega$ into nonoverlapping simplices, such that for
any two different simplices $\kappa$, $\kappa' \in \mathcal{T}_h$, $\kappa \cap
\kappa'$ consists of either the empty set, a common face or edge, or a common
vertex. The diameter of a simplex $\kappa$ will be denoted
$h_{\kappa}$ and the outward pointing normal $n_{\kappa}$. The family
$\{\mathcal{T}_h\}_h$ is indexed by the maximum element size of
$\mathcal{T}_h$, $h :=
\max_{\kappa \in \mathcal{T}_h} h_\kappa$. We denote the set of element faces in $\mathcal{T}_h$ by
$\mathcal{F}$ and let $\mathcal{F}_i$ denote the set of interior faces and $\mathcal{F}_{\Gamma}$ the set of faces in some
$\Gamma \subset \partial \Omega$. We will assume that the mesh is
fitted to the subsets of $\partial \Omega$ representing the boundary
conditions $\Gamma_D$, $\Gamma_N$  and $\Gamma_C$, so that the
boundaries of these subsets coincide with the boundaries of subsets of
element faces. 
To each face
$F$ we associate a unit normal vector, $n_F$. For interior faces its orientation is arbitrary, but fixed. On the boundary
$\partial \Omega$ we identify $n_F$ with the outward pointing normal
of $\partial \Omega$. 
The subscript on the normal is dropped in cases where it follows from the context.

We define the jump over interior faces $F \in
\mathcal{F}_i$ by $$\jump{v}\vert_F:= \lim_{\epsilon \rightarrow
  0^+} (v(x\vert_F- \epsilon n_F) - v(x\vert_F+ \epsilon n_F))$$
and for faces on the boundary, $F \in \partial \Omega$, we let
$\jump{v}\vert_F := v \vert_F$. Similarly we define the average of a function over
an interior
face $F$ by $$\{v \}\vert_F := \tfrac12 \lim_{\epsilon \rightarrow
  0^+} (v(x\vert_F- \epsilon n_F) + v(x\vert_F+ \epsilon n_F))$$ and
for $F$ on the boundary we define $\{v \}\vert_F := v \vert_F$.
The classical nonconforming space of piecewise affine
finite element functions (see \cite{CR73}) then reads
$$
V_h := \{v_h \in L^2(\Omega): \int_{F} \jump{v_h}~
\mbox{d}s = 0,\, \forall F \in \mathcal{F}_i \cup \mathcal{F}_{\Gamma_D}
\mbox{ and } v_h\vert_{\kappa} \in \mathbb{P}_1(\kappa),\,
\forall \kappa \in \mathcal{T}_h \}
$$
where $\mathbb{P}_1(\kappa)$ denotes the set of polynomials of degree less than
or equal to one restricted to the element $\kappa$. 

The finite element method takes the form: find $u_h \in
V_h$ such that
\begin{equation}\label{FEM}
A_h(u_h,v_h) = L(v_h), \quad \forall v_h
\in V_h 
\end{equation}
where $L(v_h) := (f,v_h)_\Omega$ and
\begin{equation}\label{Aform}
A_h(u_h,v_h):= a_h(u_h,v_h) + \left<u_h + [P_\gamma(u_h)]_+, \partial_n v_h \right>_{\Gamma_C} 
\end{equation}
with $P_\gamma(u_h) = \gamma\partial_n u_h -
 u_h$ and $\gamma>0$ a parameter to determine. The
linear form $a_h(\cdot,\cdot)$ coincides with the consistent part of Nitsche's method,
\[
a_h(u_h,v_h) := a(u_h,v_h) - \left< \partial_n u_h ,
  v_h\right>_{\Gamma_C} 
\]
where we have redefined $a(u,v) :=\sum_{\kappa \in \mathcal{T}_h} (\nabla u_h,\nabla v_h)_{K}$.
To see the effect of the nonlinear term, let $\Gamma_C^+$ denote the
part of the contact zone where $\gamma [\partial_n u  - \gamma^{-1}
u]_+>0$ and $\Gamma_C^0 = \Gamma_C \setminus \Gamma_C^+$. We may then
write the form $A(\cdot,\cdot)$
\[
a(u_h,v_h) - \left< \partial_n u ,
  v_h\right>_{\Gamma_C} + \left< \partial_n v,
  u_h\right>_{\Gamma^0_C} +\left<\gamma \partial_n u , \partial_n v \right>_{\Gamma^+_C}.
\]
This corroborates the naive idea that the method should impose a
Dirichlet condition on $\Gamma_C^0$, here using the penalty free
Nitsche method, and a Neumann condition on
$\Gamma_C^+$, here in the form of a penalty term. Observe that the continuity of the form that is obvious
in the formulation \eqref{Aform} (by the continuity of $[\cdot]_+$,
see more details below) is
no longer clear in this latter expression. 

For comparison, in the method of Chouly, Hild and Renard, the form \eqref{formal_Nit_contact}
$A(\cdot,\cdot)$ takes the form
\begin{multline*}
a(u_h,v_h) - \left< \partial_n u ,
  v_h\right>_{\Gamma^0_C} +\theta_1 \left< \partial v_n,
  u_h\right>_{\Gamma^0_C} + \left<u_h ,\gamma^{-1}
  v_h\right>_{\Gamma^0_C} +\theta_1\left<\gamma \partial_n u , \partial_n v \right>_{\Gamma^+_C},
\end{multline*}
where $\theta$ takes the values $-1$ or $1$ for the symmetric and
nonsymmetric versions respectively. Clearly in this case the Dirichlet
condition on $\Gamma^0_C$ is imposed using the classical Nitsche
method and the Neumann condition on $\Gamma_C^+$ is imposed either
weakly or with an additional penalty term (in the symmetric case, this
term has the wrong sign and does not stabilize the boundary condition).
\subsection{Preliminary results}
For the analysis below we will use some elementary tools that we
collect here. We will use the notation $a \lesssim b$ for
$a \leq C b$ where $C$ is a constant independent of $h$.

The following norms on $H^{\frac32+\nu}(\Omega)+V_h$ will be used below
simplify to simplify the notation, 
\[
\|v\|_{h,\Omega} := \left(\sum_{\kappa \in \mathcal{T}_h}
  \|v\|_{\kappa}^2 \right)^{\frac12},\quad \|v\|_{h,\Gamma_C} := \left(\sum_{F \in \mathcal{F}_{\Gamma_C}}
  \|v\|_{F}^2 \right)^{\frac12},
\]
the broken $H^1$-norms,
\[
\|v\|_{1,h}:=\|\nabla v \|_{h,\Omega} + \|v \|_{h,\Omega}
\]
and
\[
\|v\|_{1,C}:=\|v\|_{1,h}+ \gamma^{\frac12}  \|\partial_n 
  v \|_{h,\Gamma_C}+\gamma^{-\frac12} \|v\|_{h,\Gamma_C}.
\]
We recall, for future reference, the following inequalities:
\begin{itemize}
\item Poincar\'e inequality,
there exists $\alpha>0$ such that
\begin{equation}\label{poincare}
\alpha \|v\|^2_{1,h} \leq \|\nabla v\|^2_{h} \quad \forall
v \in V_h + {H^1(\Omega)}.
\end{equation}
\item Inverse 
inequality,
\begin{equation}\label{inverse}
|v|_{H^1(\kappa)} \leq C_I h^{-1}_\kappa \|v\|_{L^2(\kappa)}\quad \forall v \in \mathbb{P}_1(\kappa).
\end{equation}
\item Trace inequalities,
\begin{equation}\label{trace0}
\|v\|_{L^2(\partial \kappa)} \leq C_T \left( h_\kappa^{-\frac12} \|v\|_{L^2(\kappa)} +
h_\kappa^{\frac12} |v|_{H^1(\kappa)}\right)\quad \forall v \in H^1(\kappa)
\end{equation}
and 
\begin{equation}\label{trace}
\|v\|_{L^2(\partial \kappa)} \leq C_t h_\kappa^{-\frac12} \|v\|_{L^2(\kappa)} \quad\forall v \in \mathbb{P}_1(\kappa).
\end{equation}
\end{itemize}
For the analysis below we also need a quasi-interpolation operator
that maps piecewise linear nonconforming functions into the space of
piecewise linear conforming functions. 
Let $I_\text{cf}: V_h \mapsto V_h \cap H^1(\Omega) $ denote a quasi interpolation operator \cite{HW96,ABC03,KP03} such that
\begin{equation}\label{discrete_interp}
\| I_\text{cf} v_h - v_h \|_\Omega+ h \|\nabla(I_\text{cf} v_h -
v_h)\|_{h} \lesssim \|h^{\frac12} \jump{v_h}\|_{\mathcal{F}_i}\lesssim
h \|\nabla v_h\|_h .
\end{equation}
Stability is based on the fact that we can construct a function which
is zero in the bulk of the domain and with a certain value of the flux
on the boundary. We make this precise in the following lemma.
\begin{lemma}\label{specfunc}
Let $r:\Gamma_C \mapsto \mathbb{R}$ be a face--wise constant function such that $r\vert_F
\in \mathbb{R}$ for all $F\in \mathcal{F}_{\Gamma_C}$.
There exists $v_h \in V_h$ such that 
\begin{equation}\label{eq:v1}
\partial_n v_h\vert_F =r_F \in \mathbb{R} \quad \text{ for } F \in \mathcal{F}_{\Gamma_C},
\end{equation}
\begin{equation}\label{eq:v2}
\int_F  \{v_h\} ~\mbox{d}s = 0 \quad\text{for}\; F \in \mathcal{F}_i \cup
\mathcal{F}_{\Gamma_D}\cup
\mathcal{F}_{\Gamma_N}
\end{equation}
and
\begin{equation}\label{funcstab}
\|v_h\|_{\Omega} \lesssim h^{\frac32} \|r\|_{\Gamma_C}. 
\end{equation}
\end{lemma}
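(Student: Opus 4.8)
The plan is to construct $v_h$ explicitly on the layer of elements touching $\Gamma_C$ and set it to zero elsewhere. Fix a face $F \in \mathcal{F}_{\Gamma_C}$ and let $\kappa_F$ be the unique element with $F \subset \partial\kappa_F$. On $\kappa_F$ I would take $v_h$ to be the affine function that is normal to $F$ in the sense that its gradient is a multiple of $n_F$: writing $x_\perp$ for the signed distance to (the hyperplane through) $F$ measured along $-n_F$ into $\kappa_F$, set $v_h\vert_{\kappa_F} := -r_F\, x_\perp + c_F$, so that $\partial_n v_h\vert_F = r_F$, which gives \eqref{eq:v1}. The constant $c_F$ is then chosen so that the midpoint value (average over $F'$) vanishes on the \emph{other} faces $F'$ of $\kappa_F$ to the extent possible; more precisely, since on a simplex an affine function is determined by its three/four midpoint (face--average) values, I would instead prescribe $v_h$ on $\kappa_F$ by its face degrees of freedom: $0$ on the three faces $F' \neq F$ of $\kappa_F$, and the value $\int_F v_h\,\mathrm{d}s / |F|$ forced by the requirement $\partial_n v_h = r_F$. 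One checks that these two descriptions are compatible and that the resulting affine function has $L^2(\kappa_F)$ norm bounded by $h_{\kappa_F}^{1/2}\|r_F\|_{L^2(F)} \cdot h_{\kappa_F} \simeq h_{\kappa_F}^{3/2}|r_F|$, using $\|r_F\|_{L^2(F)} = |r_F||F|^{1/2} \simeq |r_F| h_{\kappa_F}^{(d-1)/2}$ together with shape regularity and a scaling argument on the reference element.

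Next I would verify \eqref{eq:v2}. On interior faces $F'$ that are \emph{not} shared by two elements of the contact layer, one side is the element $\kappa_F$ where $v_h$ has been set to have zero average on $F'$, and the other side has $v_h = 0$; so $\int_{F'}\{v_h\}\,\mathrm{d}s = 0$. The only delicate faces are interior faces shared by two neighbouring contact-layer elements $\kappa_F$ and $\kappa_{F'}$: there the average $\{v_h\}$ picks up contributions from both affine pieces, each of which has zero mean on that shared face by construction, so the average still integrates to zero. The same reasoning gives $\int_{F'}\{v_h\}\,\mathrm{d}s = 0$ on faces in $\mathcal{F}_{\Gamma_D}$ and $\mathcal{F}_{\Gamma_N}$, since those faces are not the distinguished face of any contact element (here we use that the mesh is fitted so that $\Gamma_C$ is separated from $\Gamma_D$ and $\Gamma_N$ by whole faces, and that a contact element has exactly one face on $\Gamma_C$). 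Summing the elementwise bounds over all $F \in \mathcal{F}_{\Gamma_C}$ yields $\|v_h\|_\Omega^2 = \sum_{F} \|v_h\|_{\kappa_F}^2 \lesssim \sum_F h^{3}|r_F|^2 \lesssim h^{3}\|r\|_{\Gamma_C}^2$, which is \eqref{funcstab}.

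The main obstacle is the bookkeeping for contact-layer elements that have \emph{two} faces on $\Gamma_C$ (possible near a corner of $\Gamma_C$) or that are adjacent to another contact-layer element: on such an element an affine function cannot in general simultaneously have prescribed normal derivative on two boundary faces and zero mean on all remaining faces, so the clean "one element per face" construction needs a small modification. I would handle this by enlarging the support slightly — using the element plus possibly one neighbour — or, more cleanly, by noting that the Crouzeix--Raviart degrees of freedom are the face averages, defining $v_h$ directly through those: assign each face $F\in\mathcal{F}_{\Gamma_C}$ the average value $\mu_F$ determined by \eqref{eq:v1} within its element, assign $0$ to every other face average, and check this is a consistent, well-defined element of $V_h$ with the stated norm bound. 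Shape regularity and quasi-uniformity are what make all the constants $h$-uniform, and the face-average characterization of $V_h$ is what makes \eqref{eq:v2} automatic.
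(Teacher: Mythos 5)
Your construction is essentially the paper's: on each contact element $\kappa_F$ you build the affine function whose gradient is $-r_F\, n_F$ and whose face averages vanish on all faces other than $F$, which is exactly the element constructed in the paper (there presented through vertex values $1$ on $\Gamma_C$, $1-d$ at the apex, then rescaled by $h_\kappa c_\kappa^{-1} r_F$). Two small remarks. First, the worry about an element having two faces on $\Gamma_C$ is void under the paper's hypotheses: $\Gamma_C$ is a single flat side of the polygon, and two distinct faces of a nondegenerate simplex cannot lie on the same hyperplane, so every contact element has exactly one face on $\Gamma_C$; the remaining faces then all get the zero-average assignment, which also covers the corner case where a contact element touches $\Gamma_D$ or $\Gamma_N$. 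Second, there is a slip in the scaling: you correctly note $\|r_F\|_{L^2(F)}\simeq |r_F|\,h^{(d-1)/2}$, yet then write $h^{3/2}\|r_F\|_{L^2(F)}\simeq h^{3/2}|r_F|$ and later $\sum_F h^3|r_F|^2 \lesssim h^3\|r\|_{\Gamma_C}^2$, and neither of these holds for $d\geq 2$; the two errors compensate. The clean chain is $\|v_h\|_{\kappa_F}\lesssim h^{3/2}\|r_F\|_{L^2(F)}$ directly (an inverse trace inequality, as in the paper), so that $\|v_h\|_\Omega^2\lesssim h^3\sum_F\|r_F\|_{L^2(F)}^2 = h^3\|r\|_{\Gamma_C}^2$, which is \eqref{funcstab}.
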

\begin{proof}
For a given simplex $\kappa$ with one face in
$\mathcal{F}_{\Gamma_C}$, assume that  $x_1,\hdots,x_d$ are the
vertices in $\Gamma_C$ and $x_0$ is the vertex in the bulk. Define $v_\kappa
\in \mathbb{P}_1(\kappa)$ by $
v_{\kappa}(x_i)=1$,
$i=1,\hdots,d$ and $v_h(x_0)=1-d$. Then it follows that for $F
\subset \partial \kappa \cap \Omega$
\[
\int_{F} v_\kappa ~\mbox{d}x = 0
\]
and  $\nabla v_\kappa:= |\nabla v_h| n_{\partial \Omega}$, where
$n_{\partial \Omega}$ is the normal to $\Omega$ on $\partial \kappa
\cap \partial \Omega$ and $|\nabla v_h|=c_\kappa h_{\kappa}^{-1}$ where $c_\kappa$ is a positive constant that depends only on the shape
regularity of $\kappa$. It follows that
\[
v_h := \sum_{\kappa \in \mathcal{T}_h} v_\kappa \in V_h.
\]
We conclude by multiplying $v_h$ in each
element with $h_{\kappa} c_\kappa^{-1} r_F$. Then by construction
\eqref{eq:v1} and \eqref{eq:v2} are satisfied. The stability
\eqref{funcstab} is a consequence of an inverse trace inequality,
\[
\|v_h\|_{\Omega} \lesssim \left(\sum_{F \in \mathcal{F}_{\Gamma_C}}
  h_\kappa \|h_{\kappa} c_\kappa^{-1} r_F\|_F^2 \right)^{\frac12}
\lesssim h^{\frac32} \|r\|_{\Gamma_C}.
\]
\end{proof}

The nonlinearity satisfies the following monotonicity and continuity
properties.
\begin{lemma}\label{nonlinprop}
Let $a,b \in \mathbb{R}$ then there holds
\[
([a]_+-[b]_+)^2 \leq ([a]_+ - [b]_+) (a - b),
\]
\[
|[a]_+-[b]_+| \leq |a-b|.
\]
\end{lemma}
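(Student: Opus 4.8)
The plan is to prove the two inequalities of Lemma~\ref{nonlinprop} by reducing to cases according to the signs of $a$ and $b$, exploiting that $[\cdot]_+$ is the positive-part function and is therefore nondecreasing, $1$-Lipschitz, and idempotent on the nonnegative reals. By symmetry in $a$ and $b$ (both inequalities are symmetric under swapping $a\leftrightarrow b$), I may assume without loss of generality that $a\geq b$, so that $[a]_+\geq[b]_+$ and $a-b\geq 0$; both factors on the right-hand side of the first inequality are then nonnegative, which already makes the right-hand side well defined in sign.

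For the first inequality, write $D:=[a]_+-[b]_+\geq 0$; the claim is $D^2\leq D(a-b)$, which since $D\geq 0$ is equivalent to $D\leq a-b$ whenever $D>0$ (and trivially true when $D=0$). So it suffices to show $[a]_+-[b]_+\leq a-b$ under $a\geq b$. I split into three cases. If $b\geq 0$, then $a\geq b\geq 0$, so $[a]_+-[b]_+=a-b$ and equality holds. If $a\leq 0$, then $b\leq a\leq 0$, so $[a]_+=[b]_+=0$ and $D=0$. If $a>0>b$ (the remaining case, after noting $a=0$ or $b=0$ fall into the previous ones), then $[a]_+-[b]_+=a-0=a<a-b$ since $-b>0$. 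In all cases $[a]_+-[b]_+\leq a-b$, which gives the first inequality.

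For the second inequality, again assume $a\geq b$ so that $|[a]_+-[b]_+|=[a]_+-[b]_+$ and $|a-b|=a-b$; the work above already shows $[a]_+-[b]_+\leq a-b$ in every case, so $|[a]_+-[b]_+|\leq|a-b|$ follows immediately. Alternatively, one can simply invoke that $t\mapsto[t]_+=\max(0,t)$ is $1$-Lipschitz, being the maximum of two $1$-Lipschitz functions. I do not anticipate any real obstacle here: the only point requiring a little care is the bookkeeping of the sign cases and the observation that $D^2\leq D(a-b)$ reduces to $D\leq a-b$ precisely because $D\geq 0$ once we have ordered $a\geq b$; everything else is elementary manipulation of $\max(0,\cdot)$.
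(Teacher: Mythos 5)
Your proof is correct but takes a genuinely different route from the paper for the first inequality. The paper proves $([a]_+-[b]_+)^2 \leq ([a]_+-[b]_+)(a-b)$ purely algebraically: expanding both sides and using the identities $[a]_+^2 = [a]_+ a$ and $[b]_+^2 = [b]_+ b$ (which hold for every real $a,b$, since either $[a]_+ = a$ or $[a]_+ = 0$), then bounding the cross term via $[b]_+(a-[a]_+) + [a]_+(b-[b]_+) \leq 0$, a sign observation. You instead invoke symmetry to assume $a\geq b$, factor out $D := [a]_+-[b]_+\geq 0$, and reduce to showing $D \leq a-b$, which you verify by a three-way case split. Your route has the attraction of unifying the two inequalities: once you know $[a]_+-[b]_+\leq a-b$ for $a\geq b$, both the monotonicity estimate and the $1$-Lipschitz estimate drop out immediately, whereas the paper handles the first inequality algebraically and only does the case split for the second. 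The paper's algebraic argument, on the other hand, avoids the (minor) bookkeeping of a WLOG reduction and makes the monotonicity-of-$[\cdot]_+$ structure more visible, which is the property actually used later in the stability analysis. Both are complete and elementary; the choice is a matter of taste.
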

\begin{proof}
Developing the left hand side of the expression we have
\[
[a]_+^2 +[b]_+^2 - 2 [a]_+[b]_+ \leq  [a]_+ a + [b]_+ b -  a [b]_+
-  [a]_+b =  ([a]_+ - [b]_+) (a - b).
\]
The second claim is trivially true in case both $a$
and $b$ are positive or negative. If $a$ is negative and $b$ positive then
\[
|[a]_+-[b]_+| = |b| \leq |b-a|
\]
and similarly if $b$ is negative and $a$ positive
\[
|[a]_+-[b]_+| = |a| \leq |b-a|.
\]
\end{proof}
\begin{lemma}(Continuity of $A_h$)\label{lem:continuityA}
Let $v_1,v_2 \in H^{\frac32+\nu}+V_h$ and $w_h \in V_h$. Then there holds
\[
|A_h(v_1,w_h) - A_h(v_2,w_h)| \lesssim  \|v_1 - v_2\|_{1,C}
 \|w_h\|_{1,C} \lesssim \Theta(h)^2\|v_1 - v_2\|_{\Omega} \|w_h\|_{\Omega}.
\]
\end{lemma}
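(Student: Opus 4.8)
The plan is to exploit that, by the definition \eqref{Aform}, $A_h(\cdot,w_h)$ is affine in its first argument except for the single monotone term $\langle [P_\gamma(\cdot)]_+,\partial_n w_h\rangle_{\Gamma_C}$. Writing $e:=v_1-v_2$, the difference therefore splits into three ``linear'' contributions plus one term governed by the Lipschitz property of $[\cdot]_+$:
\begin{align*}
A_h(v_1,w_h)-A_h(v_2,w_h)={}& a(e,w_h)-\langle \partial_n e,w_h\rangle_{\Gamma_C}+\langle e,\partial_n w_h\rangle_{\Gamma_C}\\
&{}+\bigl\langle [P_\gamma(v_1)]_+-[P_\gamma(v_2)]_+,\,\partial_n w_h\bigr\rangle_{\Gamma_C}.
\end{align*}
Note that all the boundary terms make sense: for $w_h\in V_h$ the gradient is piecewise constant, and for the $H^{\frac32+\nu}$-component of $e$ the normal derivative has an $L^2(\Gamma_C)$ trace.

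First I would bound the three linear terms by the Cauchy--Schwarz inequality, distributing the weights $\gamma^{\pm1/2}$ so that the pieces $\|\nabla\cdot\|_{h,\Omega}$, $\gamma^{1/2}\|\partial_n\cdot\|_{h,\Gamma_C}$ and $\gamma^{-1/2}\|\cdot\|_{h,\Gamma_C}$ constituting $\|\cdot\|_{1,C}$ appear symmetrically: $|a(e,w_h)|\le \|\nabla e\|_{h,\Omega}\|\nabla w_h\|_{h,\Omega}\le\|e\|_{1,h}\|w_h\|_{1,h}$, while $|\langle \partial_n e,w_h\rangle_{\Gamma_C}|\le(\gamma^{1/2}\|\partial_n e\|_{h,\Gamma_C})(\gamma^{-1/2}\|w_h\|_{h,\Gamma_C})$ and symmetrically for $\langle e,\partial_n w_h\rangle_{\Gamma_C}$. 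Each of these is thus $\lesssim\|e\|_{1,C}\|w_h\|_{1,C}$.

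For the nonlinear term I would invoke the second inequality of Lemma~\ref{nonlinprop}, giving the pointwise bound $|[P_\gamma(v_1)]_+-[P_\gamma(v_2)]_+|\le|P_\gamma(v_1)-P_\gamma(v_2)|=|\gamma\partial_n e-e|$ a.e.\ on $\Gamma_C$; hence by Cauchy--Schwarz and the triangle inequality
\[
\bigl|\langle [P_\gamma(v_1)]_+-[P_\gamma(v_2)]_+,\partial_n w_h\rangle_{\Gamma_C}\bigr|\le\bigl(\gamma\|\partial_n e\|_{h,\Gamma_C}+\|e\|_{h,\Gamma_C}\bigr)\|\partial_n w_h\|_{h,\Gamma_C}.
\]
Writing $\gamma\|\partial_n e\|_{h,\Gamma_C}=\gamma^{1/2}(\gamma^{1/2}\|\partial_n e\|_{h,\Gamma_C})$, $\|e\|_{h,\Gamma_C}=\gamma^{1/2}(\gamma^{-1/2}\|e\|_{h,\Gamma_C})$ and $\|\partial_n w_h\|_{h,\Gamma_C}=\gamma^{-1/2}(\gamma^{1/2}\|\partial_n w_h\|_{h,\Gamma_C})$, the stray powers of $\gamma$ cancel and this term is again $\lesssim\|e\|_{1,C}\|w_h\|_{1,C}$. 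Summing the four estimates yields the first asserted inequality.

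The second inequality follows by bounding $\|\cdot\|_{1,C}$ by $\|\cdot\|_\Omega$ on the finite element space: for a piecewise affine function, Poincar\'e \eqref{poincare}, the inverse inequality \eqref{inverse} and the discrete trace inequality \eqref{trace} give $\|\nabla\cdot\|_{h,\Omega}\lesssim h^{-1}\|\cdot\|_\Omega$, $\|\cdot\|_{h,\Gamma_C}\lesssim h^{-1/2}\|\cdot\|_\Omega$ and $\|\partial_n\cdot\|_{h,\Gamma_C}\lesssim h^{-3/2}\|\cdot\|_\Omega$, so that $\|\cdot\|_{1,C}\lesssim\Theta(h)\|\cdot\|_\Omega$ with $\Theta(h)$ the maximum of the resulting $h$- and $\gamma$-dependent factors; applying this to $w_h$, and to the first argument when it too is discrete (which is the setting in which this form is used), gives the claim. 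The computations are routine; the only point needing care is the systematic bookkeeping of the $\gamma^{\pm1/2}$ weights so that every term telescopes exactly into $\|\cdot\|_{1,C}$, together with the observation that passing to the $\Theta(h)$-bound relies on the inverse inequality and hence on discreteness of both arguments.
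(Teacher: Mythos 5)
Your proof is correct and follows essentially the same route as the paper's: Cauchy--Schwarz for the linear part, the Lipschitz bound $|[a]_+-[b]_+|\le|a-b|$ from Lemma~\ref{nonlinprop} together with Cauchy--Schwarz for the nonlinear term, and then the inverse and trace inequalities to pass from $\|\cdot\|_{1,C}$ to $\Theta(h)\|\cdot\|_\Omega$. You are somewhat more explicit than the paper in two respects (writing out the $\langle v_1-v_2,\partial_n w_h\rangle_{\Gamma_C}$ term separately rather than silently absorbing it into the linear bound, and flagging that the $\Theta(h)$ estimate requires both arguments to be discrete, which is indeed the only setting where that bound is used), but the decomposition, the key lemma invoked, and the $\gamma^{\pm1/2}$ bookkeeping are identical.
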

\begin{proof}
By the Cauchy--Schwarz inequality we have
\[
a_h(v_1-v_2,w_h)| \leq \|v_1 - v_2\|_{1,C} \|w_h\|_{1,C}.
\]
For the nonlinear term the following bound holds as a consequence
of the third inequality of Lemma \ref{nonlinprop} and the inequalities
\eqref{inverse}--\eqref{trace}:
\[\begin{array}{l}
\left< \gamma ( [\partial_n v_1 - \gamma^{-1} v_1]_+ - [\partial_n v_2 - \gamma^{-1} v_2]_+, \partial_n w_h
   \right>_{\Gamma_C} \\[3mm]
\qquad \leq \left< (|\gamma^{\frac12} \partial_n( v_1-v_2) - \gamma^{-1/2} (v_1-v_2)|,\gamma^{\frac12} |\partial_n w_h
   |\right>_{\Gamma_C} \\[3mm]
   \qquad  \lesssim \|v_1 - v_2\|_{1,C}
 \|w_h\|_{1,C}\\[3mm]\qquad
\lesssim \Theta(h)^2 \|u_1 - u_2\|_{\Omega}
\|w_h\|_{\Omega} 
\end{array}\]
with $\Theta(h):=1+h^{-1}( C_I + C_t C_I \gamma^{\frac12} h^{-\frac12} + C_t
\gamma^{-\frac12} h^{\frac12})$.
\end{proof}
\section{Existence and uniqueness of discrete solutions}
In this section we will prove that the finite dimensional nonlinear
system \eqref{FEM} admits a unique solution under suitable assumptions
on the parameter $\gamma$. First, with $N_V:= \mbox{dim}~ V_h$ define the mapping $G:\mathbb{R}^{N_V} \mapsto \mathbb{R}^{N_V}$ by
\begin{equation}\label{nonlin_syst}
(G(U),V)_{\mathbb{R}^{N_V}} :=A_h(u_h,v_h) - L(v_h),
\end{equation}
where $U = \{u_i\}$, with $u_i$ denoting the degrees of freedom of
$V_h$ associated with the Crouzeix-Raviart basis functions $\{\varphi_i\}_{i=1}^{N_V}$ and similarly
$V=\{v_i\}$ denotes the vector of degrees of freedom associated with the
test function $v_h$. The nonlinear system associated to \eqref{FEM} may
then be written, find $U \in \mathbb{R}^{N_V}$ such that $G(U)=0$.


Let us next prove a positivity result for the formulation \eqref{FEM}
that will be useful when proving existence and uniqueness.
\begin{proposition}\label{positivity}
Assume that $\gamma = \gamma_0 h$ with $\gamma_0$ large enough. Then, for $u_1,u_2\in
V_h$, there exists $v_ h\in
V_h$ such that
\begin{equation}\label{eq:monotone}
\begin{array}{c}\alpha\| u_1 - u_2\|^2_{1,h} + \gamma^{-1} \|u_1-u_2+  [P_\gamma(u_1)]_+- [P_\gamma(u_2)]_+\|^2_{\Gamma_C}\\[3mm]
 \lesssim A_h(u_1,v_h)-A_h(u_2,v_h).
\end{array}\end{equation}
Moreover, for $\gamma_0$ large enough, there exists $B\in
\mathbb{R}^{N_V\times N_V}$ such that for $X$ with $|X|_{\mathbb{R}^{N_V}}$ large enough
\[
(G(X),BX)_{\mathbb{R}^{N_V}} > 0
\]
and there exists $b_1,b_2>0$ associated to $B$ such that
\[
b_1 |X|_{\mathbb{R}^{N_V}} \leq |BX|_{\mathbb{R}^{N_V}} \leq b_2 |X|_{\mathbb{R}^{N_V}}.
\]
\end{proposition}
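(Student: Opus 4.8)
The plan is to build the test function $v_h$ explicitly as a correction of $w := u_1 - u_2$, using the special function from Lemma~\ref{specfunc} to repair the boundary behaviour. Concretely, I would set $v_h := w + \delta\, z_h$, where $z_h \in V_h$ is the function supplied by Lemma~\ref{specfunc} associated with the face-wise constant data $r_F := -\gamma^{-1}\big(u_1 - u_2 + [P_\gamma(u_1)]_+ - [P_\gamma(u_2)]_+\big)$ evaluated on each face $F \in \mathcal{F}_{\Gamma_C}$ (this is indeed face-wise constant since $u_1,u_2$ are piecewise affine and $P_\gamma$ is affine in each argument, so $\partial_n u_i$ and $u_i$ restricted to a face are constants after the $[\cdot]_+$ is applied — wait, $[\cdot]_+$ of a linear function is not constant, so one should instead take $r_F$ to be a suitable face average, which is enough for the Lemma). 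The parameter $\delta>0$ is to be fixed later, proportional to $\gamma^{-1}$.

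First I would expand $A_h(u_1,v_h) - A_h(u_2,v_h)$ into its linear part $a_h(w, v_h)$ and the nonlinear part $\langle w + [P_\gamma(u_1)]_+ - [P_\gamma(u_2)]_+, \partial_n v_h\rangle_{\Gamma_C}$. For the $w$-contribution (i.e.\ taking $v_h = w$), the linear part $a_h(w,w) = \|\nabla w\|_h^2 - \langle \partial_n w, w\rangle_{\Gamma_C}$ combines with the nonlinear term $\langle w + [P_\gamma(u_1)]_+-[P_\gamma(u_2)]_+, \partial_n w\rangle_{\Gamma_C}$; the two $\langle \partial_n w, w\rangle$ terms cancel, leaving $\|\nabla w\|_h^2 + \gamma^{-1}\langle [P_\gamma(u_1)]_+-[P_\gamma(u_2)]_+, P_\gamma(w) + \gamma^{-1}\cdots\rangle$ — more precisely one rewrites $\partial_n w = \gamma^{-1}(P_\gamma(w) + w)$ and uses the monotonicity inequality of Lemma~\ref{nonlinprop} with $a = P_\gamma(u_1)$, $b = P_\gamma(u_2)$ (note $a-b = P_\gamma(w)$) to get a nonnegative quadratic contribution $\gamma^{-1}\|[P_\gamma(u_1)]_+-[P_\gamma(u_2)]_+\|^2_{\Gamma_C}$ plus cross terms. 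The $\delta z_h$-contribution is where the target quantity $\gamma^{-1}\|w + [P_\gamma(u_1)]_+-[P_\gamma(u_2)]_+\|^2_{\Gamma_C}$ is produced: since $\partial_n z_h\vert_F = r_F$, the nonlinear term against $\delta\partial_n z_h$ gives exactly $-\delta\gamma\|r\|^2_{\Gamma_C} = -\delta\gamma^{-1}\|w+[\cdots]_+-[\cdots]_+\|^2$ up to the face-averaging discrepancy, while $a_h(w,\delta z_h) = \delta(\nabla w,\nabla z_h)_h - \delta\langle\partial_n w, z_h\rangle_{\Gamma_C}$ must be absorbed. Using $\|z_h\|_\Omega \lesssim h^{3/2}\|r\|_{\Gamma_C}$ from \eqref{funcstab}, the inverse inequality \eqref{inverse} to bound $\|\nabla z_h\|_h \lesssim h^{1/2}\|r\|_{\Gamma_C}$, and the trace inequality \eqref{trace} plus \eqref{inverse} to bound $\langle\partial_n w, z_h\rangle_{\Gamma_C}$, all stray terms carry a positive power of $h$ relative to $\gamma = \gamma_0 h$, so choosing $\gamma_0$ large and $\delta \sim \gamma_0^{-1}$ (hence $\delta \sim h/\gamma$) lets a Young's inequality absorb them into $\alpha\|\nabla w\|_h^2$ (via Poincaré \eqref{poincare}) and half the target boundary term. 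This yields \eqref{eq:monotone}.

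For the second part, I would take $B$ to be the matrix representing the linear map $X \mapsto V_h$-coefficients, i.e.\ $B$ sends the coefficient vector of $u_h - u_{h,*}$ (where $u_{h,*}$ is a fixed reference, e.g.\ the would-be discrete solution, but more simply one argues with $u_2$ an arbitrary fixed element and $u_1 = $ the variable) to the coefficient vector of the corresponding $v_h$ constructed above; since $v_h = w + \delta z_h$ depends linearly on $w$ and $z_h$ depends linearly (through $r$) on $w$ — again replacing $[\cdot]_+$-data by its face average makes the dependence genuinely linear — the map $w \mapsto v_h$ is linear, bounded, and invertible for $\delta$ small (it is $I + \delta K$ with $\|K\|$ controlled), giving the two-sided bound $b_1|X| \le |BX| \le b_2|X|$ by the norm equivalence between coefficient vectors and $L^2$-norms on the finite-dimensional space $V_h$ (quasi-uniformity). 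Then \eqref{eq:monotone} applied with $u_1 \leftrightarrow X$, $u_2 \leftrightarrow$ a fixed vector gives $(G(X) - G(X_0), BX - BX_0)_{\mathbb{R}^{N_V}} \gtrsim \|w\|_{1,h}^2 \gtrsim |X - X_0|^2$; since $G(X_0)$ and $BX_0$ are fixed, for $|X|$ large the leading quadratic term dominates the linear ones and $(G(X),BX)>0$ follows.

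\textbf{Main obstacle.} The genuine difficulty is the handling of the positive-part nonlinearity in two places at once: (i) ensuring that the data $r_F$ fed into Lemma~\ref{specfunc} is legitimately face-wise constant — this forces working with a face average of $w + [P_\gamma(u_1)]_+-[P_\gamma(u_2)]_+$ and then showing the averaging error is itself controlled by $h^{1/2}$ times broken-$H^1$ quantities (via trace/inverse estimates), so it can be absorbed; and (ii) making the map $w \mapsto v_h$ truly linear so that a fixed matrix $B$ works uniformly in $X$ — again the averaging device is what rescues linearity, but one must check the absorption constants do not degrade. Balancing the single free parameter $\gamma_0$ (equivalently $\delta$) against all the $h$-powers from \eqref{funcstab}, \eqref{inverse}, \eqref{trace} and Poincaré is the computational heart of the argument; everything else is bookkeeping.
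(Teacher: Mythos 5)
Your overall strategy — take $v_h = w + (\text{correction from Lemma \ref{specfunc}})$, use the boundary flux of the correction to manufacture the $\gamma^{-1}\|\cdot\|^2_{\Gamma_C}$ term, invoke monotonicity for the $[\cdot]_+$ terms — is the right one and matches the paper in spirit. But your specific choice of data $r_F$ for the correction introduces two errors, one fatal.

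First, the sign. You set $r_F = -\gamma^{-1}\overline{(w + [P_\gamma(u_1)]_+ - [P_\gamma(u_2)]_+)}$ and then correctly compute that the nonlinear part of $A_h$ tested against $\delta\partial_n z_h$ equals $-\delta\gamma^{-1}\|w + [\cdots]\|^2_{\Gamma_C}$ (up to the face-average discrepancy). That is a \emph{negative} contribution to $A_h(u_1,v_h)-A_h(u_2,v_h)$, whereas \eqref{eq:monotone} requires the right-hand side to be \emph{bounded below} by that same quantity. You have built a correction that works against you. The sign must be flipped.

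Second, and more seriously, the construction of $B$ collapses. You claim ``replacing $[\cdot]_+$-data by its face average makes the dependence genuinely linear.'' It does not. The map $u_h \mapsto \frac{1}{|F|}\int_F [P_\gamma(u_h)]_+ \,ds$ is convex but nonlinear; averaging a nonlinear functional does not linearize it. With your $r_F$ the map $U \mapsto$ (coefficients of $v_h$) is nonlinear, so there is no fixed matrix $B$ and the second assertion of the proposition — crucial for the Brouwer argument in the existence proof — cannot be extracted. The paper avoids this entirely by choosing the \emph{simpler, linear} data $r_F := \gamma^{-1}|F|^{-1}\int_F w\,ds = \gamma^{-1}\bar w\vert_F$; the quantity $\|w + [\cdots]\|^2_{\Gamma_C}$ then emerges not directly from $\partial_n x_h$ but from combining the cross term $\gamma^{-1}\langle[\cdots],w\rangle$ (furnished by monotonicity, Lemma \ref{nonlinprop}) with $\gamma^{-1}\langle \bar w, w\rangle$ via the algebraic identity $a^2+b^2+2ab=(a+b)^2$.

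A smaller point: you list $a_h(w,\delta z_h)$ among terms to be absorbed, but by construction $z_h$ has zero face averages on interior, Dirichlet, and Neumann faces, and each $y_h\in V_h$ is elementwise harmonic; integration by parts then gives $(\nabla y_h,\nabla z_h)_h-\langle\partial_n y_h,z_h\rangle_{\Gamma_C}=0$ for all $y_h\in V_h$, so $a_h(w,z_h)$ vanishes identically. This cancellation is what makes $\delta=1$ work with no separate tuning parameter; the only free parameter is $\gamma_0$, used to absorb the genuinely nonzero remainder $\gamma^{-1}\langle [\cdots]+w,\,w-\bar w\rangle_{\Gamma_C}$ via the approximation property of the face-average projection.
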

\begin{proof}
Let $w_h:= u_1-u_2$. Observe that by Lemma \ref{specfunc}
we can choose $x_h(w_h) \in V_h$ such that 
\begin{equation}\label{eq:x1}
\partial_n x_h\vert_F = \gamma^{-1} |F|^{-1} \int_F  w_h ~\mbox{d}s =:
\gamma^{-1} \bar w\vert_F, \quad \mbox{ for } F \in \mathcal{F}_{\Gamma_C}
\end{equation}
and 
\begin{equation}\label{eq:x2}
\int_F  \{x_h\} ~\mbox{d}s = 0 \mbox{ for  } F \in \mathcal{F}_i \cup
\mathcal{F}_{\Gamma_D}\cup
\mathcal{F}_{\Gamma_N}.
\end{equation}
It follows using integration by parts that for all $y_h \in V_h$ there holds
\[
(\nabla y_h,\nabla x_h) - \left<\partial_n y_h , x_h
\right>_{\Gamma_C} = 0.
\]
Now taking $v_h = w_h + x_h$ leads to
\begin{align*}
A_h(u_1,v_h)-A(u_2,v_h)  ={}& \|\nabla w_h\|^2_h  + (\nabla w_h,\nabla x_h)_h - \left<\partial_n w_h, x_h
\right>_{\Gamma_C}  + \left<\gamma^{-1} \bar w, w_h \right> \\  & + \left< [P_\gamma(u_1)]_+- [P_\gamma(u_2)]_+, \partial_n w_h + \gamma^{-1} \bar w
\right>_{\Gamma_C} \\
 = {}&\|\nabla w_h\|^2_\Omega + \left<\gamma^{-1} \bar w, w_h \right>\\ & + \left< [P_\gamma(u_1)]_+- [P_\gamma(u_2)]_+, \partial_n w_h - \gamma^{-1} w_h
\right>_{\Gamma_C} \\
&+ \left< [P_\gamma(u_1)]_+- [P_\gamma(u_2)]_+, \gamma^{-1} (w_h + \bar w) \right>_{\Gamma_C} \\
 = {}&\|\nabla w_h\|^2_\Omega + \left<\gamma^{-1} w_h, w_h \right> \\ & + \left< [P_\gamma(u_1)]_+- [P_\gamma(u_2)]_+, \partial_n w_h - \gamma^{-1} w_h
\right>_{\Gamma_C} \\
& +2  \left< [P_\gamma(u_1)]_+- [P_\gamma(u_2)]_+, \gamma^{-1} w_h \right>_{\Gamma_C} \\
& + \left< [P_\gamma(u_1)]_+- [P_\gamma(u_2)]_+ + w_h, \gamma^{-1} (w_h -\bar w)
\right>_{\Gamma_C}.
\end{align*}
Applying the monotonicity $$\gamma^{-1} \| [P_\gamma(u_1)]_+- [P_\gamma(u_2)]_+\|_{\Gamma_C}^2 \leq  \left< [P_\gamma(u_1)]_+- [P_\gamma(u_2)]_+, \partial_n w_h - \gamma^{-1} w_h
\right>_{\Gamma_C}$$ 
we see that
\[\begin{array}{c}
\gamma^{-1} \| [P_\gamma(u_1)]_+- [P_\gamma(u_2)]_++w_h\|_{\Gamma_C}^2 \leq  \left< [P_\gamma(u_1)]_+- [P_\gamma(u_2)]_+, \partial_n w_h - \gamma^{-1} w_h
\right>_{\Gamma_C}\\[3mm]
 + \left<\gamma^{-1} w_h, w_h \right>_{\Gamma_C} + 2  \left< [P_\gamma(u_1)]_+- [P_\gamma(u_2)]_+, \gamma^{-1} w_h \right>_{\Gamma_C}.
\end{array}\]

Then, using the arithmetic-geometric inequality together
with the approximation properties of the piecewise constant
approximation $\bar w$ and an elementwise trace inequality to get the bound
\[\begin{array}{c}
\left< [P_\gamma(u_1)]_+- [P_\gamma(u_2)]_+ + w_h, \gamma^{-1} (w_h -\bar w)
\right>_{\Gamma_C} \\[3mm] \leq \frac12 \gamma^{-1} \|[P_\gamma(u_1)]_+- [P_\gamma(u_2)]_+ + w_h\|_{\Gamma_C}^2 +
  \frac12 \gamma^{-1} C h \|\nabla w_h\|^2_\Omega
\end{array}\]
we finally obtain
\[\begin{array}{c}
(1 -  \frac12 \gamma^{-1} C h) \|\nabla
w_h\|^2_h +   \frac12 \gamma^{-1}\| [P_\gamma(u_1)]_+- [P_\gamma(u_2)]_+ + w_h\|_{\Gamma_C}^2\\[3mm]  \leq
  A_h(u_1,v_h) -A_h(u_2,v_h).
\end{array}\]
We conclude by choosing $\gamma > C h$.

For the second claim, first consider equation \eqref{eq:monotone} with
$u_1=u_h$, $u_2=0$,
\begin{equation}\label{eq:pos_u}
\alpha\| u_h \|^2_{1,h} + \gamma^{-1} \|u_h-  [P_\gamma(u_h)]_+]_+\|^2_{\Gamma_C} \lesssim A_h(u_h,u_h + x_h(u_h)).
\end{equation}
Let the positive constants $c_h$ and $C_h$ denote the square roots of
the   smallest and the
largest eigenvalues respectively of the matrix given by $(\varphi_i,\varphi_j)_{\Omega}$, $1\leq i,j\leq
N_V$ such that
\[
c_h |U|_{\mathbb{R}^{N_V}} \leq  \|u_h\|_{\Omega} \leq C_h |U|_{\mathbb{R}^{N_V}}.
\]
Let $B$ denote the transformation matrix such that the finite element function
corresponding to the vector $B U$ is the function $u_h +
x_h(u_h)$. First we show that for  $\gamma$ sufficiently large there are constants $b_1$ and $b_2$ such that $b_1 |U|_{\mathbb{R}^{N_V}}
\leq |BU|_{\mathbb{R}^{N_V}} \leq b_2 |U|_{\mathbb{R}^{N_V}}$. This can be seen by observing that
$$
\|u_h\|_\Omega \leq \|u_h+x_h\|_\Omega+\|x_h\|_\Omega\leq 
C_h |B U|_{\mathbb{R}^{N_V}} + C \gamma^{-1} h \|u_h\|_\Omega
$$
so that 
$$
c_h  (1 - C \gamma^{-1} h)|U|_{\mathbb{R}^{N_V}} \leq  (1 - C \gamma^{-1} h) \|u_h\|_\Omega \leq C_h |BU|_{\mathbb{R}^{N_V}}.
$$
Similarly we may prove the upper bound using $c_h |B
U|_{\mathbb{R}^{N_V}} \leq \|u_h+x_h\|_\Omega$ so that
\begin{multline*}
c_h |B U|_{\mathbb{R}^{N_V}} \leq \|u_h\|_{\Omega}+\|x_h\|_{\Omega}
\leq \|u_h\|_{\Omega}+ C \gamma^{-1} h \|u_h\|_\Omega \leq C_h  (1 + C \gamma^{-1} h)|U|_{\mathbb{R}^{N_V}} .
\end{multline*}
Then there holds using 
\eqref{eq:pos_u},
\begin{align*}
(G(U),B U)_{\mathbb{R}^{N_V}}{}& =   A_h(u_h,u_h +
x_h(u_h)) - L(u_h +
x_h(u_h)) \\
{}&\ge 
\frac{\alpha}{2} \|u_h\|^2_{1,h} - \frac{C_*^2}{2 \alpha}
\|f\|^2_{\Omega}
 \ge \frac{\alpha}{2}  \lambda_1 |U|_{\mathbb{R}^{N_V}}^2 - \frac{C_*^2}{2 \alpha} \|f\|^2_{\Omega} .
\end{align*}
where $C_*$ is the constant such that $L(u_h +
x_h(u_h)) \leq C_*
\|f\|_{\Omega}\|u_h\|_{1,h}$ and $\lambda_1$ is the smallest eigenvalue of the matrix defined by
$(\nabla \varphi_i,\nabla \varphi_j)_h+(\varphi_i,\varphi_j)_{\Omega}$, $1\leq i,j\leq
N_V$. We conclude that for 
\[
|U|_{\mathbb{R}^{N_V}} >  \frac{C_*}{\alpha \lambda_1^{\frac12}} \|f\|_{\Omega}
\]
there holds
\[
(G(U),B U)_{\mathbb{R}^{N_V}} >0.
\]
\end{proof}
\begin{proposition}
The formulation \eqref{FEM} admits a unique solution
for $\gamma = \gamma_0 h$, with $\gamma_0$ large enough.
\end{proposition}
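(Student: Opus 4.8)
The plan is to derive the result directly from Proposition~\ref{positivity}: existence via a classical topological argument and uniqueness via the monotonicity estimate \eqref{eq:monotone}.

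\textbf{Existence.} First I would record that $G$ is continuous on $\mathbb{R}^{N_V}$: the linear part of $A_h$ is a polynomial in the degrees of freedom, the nonlinear contribution is continuous because $t\mapsto[t]_+$ is (globally Lipschitz) continuous by Lemma~\ref{nonlinprop}, and $L$ is linear. Proposition~\ref{positivity} provides a matrix $B\in\mathbb{R}^{N_V\times N_V}$ with $b_1|X|_{\mathbb{R}^{N_V}}\le|BX|_{\mathbb{R}^{N_V}}\le b_2|X|_{\mathbb{R}^{N_V}}$, so $B$ is invertible, and $(G(X),BX)_{\mathbb{R}^{N_V}}=(B^{\top}G(X),X)_{\mathbb{R}^{N_V}}$. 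Hence I would set $\hat G:=B^{\top}G$, again continuous, and choose $R$ strictly larger than the threshold $\tfrac{C_*}{\alpha\lambda_1^{1/2}}\|f\|_\Omega$ appearing in Proposition~\ref{positivity}. Then $(\hat G(X),X)_{\mathbb{R}^{N_V}}=(G(X),BX)_{\mathbb{R}^{N_V}}>0$ for every $X$ with $|X|_{\mathbb{R}^{N_V}}=R$, so by the classical consequence of Brouwer's fixed point theorem (a continuous field pointing outward on a sphere vanishes inside it) there is $X^\star$ with $|X^\star|_{\mathbb{R}^{N_V}}\le R$ and $\hat G(X^\star)=0$; since $B^{\top}$ is invertible this yields $G(X^\star)=0$, i.e.\ the associated $u_h\in V_h$ solves \eqref{FEM}.

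\textbf{Uniqueness.} Suppose $u_1,u_2\in V_h$ both solve \eqref{FEM}. Then $A_h(u_1,v_h)-A_h(u_2,v_h)=L(v_h)-L(v_h)=0$ for all $v_h\in V_h$, in particular for the function $v_h$ supplied by \eqref{eq:monotone} for this pair $u_1,u_2$. Therefore
\[
\alpha\|u_1-u_2\|_{1,h}^2+\gamma^{-1}\bigl\|u_1-u_2+[P_\gamma(u_1)]_+-[P_\gamma(u_2)]_+\bigr\|_{\Gamma_C}^2\lesssim A_h(u_1,v_h)-A_h(u_2,v_h)=0,
\]
which forces $\|u_1-u_2\|_{1,h}=0$, hence $u_1=u_2$ since $\|\cdot\|_{1,h}$ is a norm on $V_h$.

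The argument carries no real obstacle once Proposition~\ref{positivity} is available; the only point needing a little care is that the coercivity-type bound is stated with the auxiliary directions $BX$ rather than $X$, so one must first pass to $B^{\top}G$ (using invertibility of $B$) before invoking the Brouwer-type surjectivity lemma. One should also keep in mind that $B$ and the constants $b_1,b_2,\lambda_1,c_h,C_h$ depend on $h$; this is harmless here, as $V_h$ is finite dimensional and the assertion is for a fixed mesh with $\gamma=\gamma_0 h$, $\gamma_0$ large enough so that both \eqref{eq:monotone} and the positivity of $(G(X),BX)_{\mathbb{R}^{N_V}}$ hold.
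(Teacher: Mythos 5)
Your proposal is correct and follows essentially the same route as the paper: continuity of $G$, the second claim of Proposition~\ref{positivity} combined with the ``outward-pointing vector field'' corollary of Brouwer's fixed point theorem for existence (the paper re-derives this corollary by contradiction with $\phi(X)=-\tfrac{q}{b_1}B^{\top}G(X)/|G(X)|$, whereas you invoke it directly), and the monotonicity estimate \eqref{eq:monotone} for uniqueness.
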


\begin{proof}
Fix $h>0$.
Observe that $G$ defined by \eqref{nonlin_syst} is continuous since by Lemma \ref{lem:continuityA}
\begin{align*}
|G(U_1) - G(U_2)|_{\mathbb{R}^{N_V}} = & \sup_{W \in \mathbb{R}^{N_V}: |W|=1} (G(U_1)
-G(U_2),W)_{\mathbb{R}^{N_V}} \\
= & \sup_{w \in V_h} (A_h(u_1,w_h) - A_h(u_2,w_h))  \\
\lesssim & \Theta(h)^2 \|u_1 - u_2\|_{\Omega}
\|w_h\|_{\Omega} \leq
    \Theta(h)^2 C^2_h |U_1 - U_2|_{\mathbb{R}^{N_V}}.
\end{align*}

By the second claim of Proposition \ref{positivity} we may fix $q \in
\mathbb{R}_+$ such that for $X \in \mathbb{R}^{N_V}$ with
$|X| \ge q$ there holds 
\begin{equation}\label{eq:Gpos}
(G(X), B X)_{\mathbb{R}^{N_V}} > 0.
\end{equation}
Assume that there exists no $X \in \mathbb{R}^{N_V}$ such that $G(X)=0$ and define the
function $\phi(X) = -q/{b_1} B^\text{T}
G(X)/|G(X)|_{\mathbb{R}^{N_V}}$. Since $G(X)\ne 0$ and by the
continuity of $G(X)$ $\phi(\cdot)$ is well defined
and continuous. The transpose of
$B$ satisfies the same bounds as $B$ and therefore $\phi$ maps the ball of radius
$q b_2/b_1$ in $\mathbb{R}^{N_V}$ into itself. It then follows by Brouwers fixed point
theorem that $\phi$ admits a fixed point: there exists $Z \in
\mathbb{R}^{N_V}$ with $|Z|_{\mathbb{R}^{N_V}}\ge q$ such that
\[
Z = \phi(Z) =- q/b_1 B^\text{T} G(Z)/|G(Z)|_{\mathbb{R}^{N_V}}.
\]
By definition then $|Z|_{\mathbb{R}^{N_V}}^2= -q/b_1 (G(Z),B Z)_{\mathbb{R}^{N_V}}/|G(Z)|_{\mathbb{R}^{N_V}}<0$ which 
contradicts the assumption \eqref{eq:Gpos}. It follows that there exists at least one
$U\in \mathbb{R}^{N_V}$ such that $G(U)=0$.

Uniqueness of the discrete solution is an immediate consequence of
Proposition \eqref{positivity}. Indeed assume that $u_1$ and $u_2$
both are solutions to \eqref{FEM}, then for $v_h$ chosen as in the Proposition,
\[
\alpha \| u_1 - u_2\|^2_{1,h}
\lesssim A_h(u_1,v_h)-A_h(u_2,v_h) = (f,v_h)_\Omega- (f,v_h)_\Omega = 0.
\]

\end{proof}
\section{A priori error estimates}
A priori error estimates may now be derived by combining the
techniques of the uniqueness argument above with the Galerkin
perturbation arguments. 
\begin{theorem}\label{thm:apriori}
Assume that $u \in H^{\frac32+\nu}(\Omega)$, with
$0<\nu\leq \tfrac12$ is the solution of the problem
\eqref{signorini}.
Assume that  $u_h$ denotes the
solution of \eqref{FEM}-\eqref{Aform}  where $\gamma =
\gamma_0 h$. If $\gamma_0$ is chosen sufficiently large and $h\leq t_C$, where
$t_C$ is the constant of assumption \eqref{f_assump}, then there holds, with $e:=u-u_h$,
\begin{align*}
\alpha^{\frac12} \|e\|_{1,h} + \gamma^{-\frac12}\| [P_\gamma
u_h]_++  u_h\|_{\Gamma_C} 
\lesssim {}& \inf_{v_h \in V_h} (\|u - v_h\|_{1,C} + h^{\frac12}
\|\partial_n (u -v_h)\|_{\mathcal{F}_i}) \\
& + h \|f\|_{L^2_\infty(\Omega)}.
\end{align*}
\end{theorem}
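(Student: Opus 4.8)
The plan is to follow the classical Nitsche-type error analysis, adapted to the nonconforming setting and the nonlinear contact term, combining the coercivity result of Proposition \ref{positivity} with a Galerkin-type consistency estimate. First I would split the error via an interpolant: write $e = u - u_h = (u - v_h) - (u_h - v_h) =: \eta - \xi_h$ for arbitrary $v_h \in V_h$, where $\xi_h \in V_h$. The quantity to control is $\xi_h$ in the norm $\|\cdot\|_{1,h}$ together with the nonlinear boundary residual. Applying Proposition \ref{positivity} with $u_1 = u_h$ and $u_2 = v_h$ (noting the left-hand side of \eqref{eq:monotone} dominates $\alpha\|\xi_h\|_{1,h}^2$ plus the relevant $\gamma^{-1/2}$-weighted contact term, using the triangle inequality and Lemma \ref{nonlinprop} to pass between $[P_\gamma(u_h)]_+ + u_h$ and the corresponding quantity for $e$), there exists a test function $w_h \in V_h$ with $\|w_h\|_{1,C} \lesssim \|\xi_h\|_{1,h}$ (this last bound coming from the stability \eqref{funcstab} of the special function $x_h$, which is what makes the broken-$H^1$ norm control the full $\|\cdot\|_{1,C}$ norm of $w_h$) such that the square of the error measure is bounded by $A_h(u_h, w_h) - A_h(v_h, w_h)$.

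Next I would insert the exact solution: $A_h(u_h, w_h) - A_h(v_h, w_h) = [A_h(u_h, w_h) - A_h(u, w_h)] + [A_h(u, w_h) - A_h(v_h, w_h)]$. The second bracket is handled directly by the continuity Lemma \ref{lem:continuityA}: it is $\lesssim \|u - v_h\|_{1,C}\|w_h\|_{1,C}$. The first bracket is the consistency/Galerkin-orthogonality term. Here I would test the strong form \eqref{signorini} against $w_h$ elementwise and use Green's formula on each simplex, which produces, besides $L(w_h)$, the interelement jump terms $\sum_{F \in \mathcal{F}_i} \langle \partial_n u, \jump{w_h}\rangle_F$ characteristic of nonconforming methods, plus the boundary term $-\langle \partial_n u, w_h\rangle_{\Gamma_C}$. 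Using the continuity relation \eqref{eq:condonu}, $u + [P_\gamma(u)]_+ = 0$ on $\Gamma_C$, so the nonlinear contribution of $A_h(u,w_h)$ vanishes and one is left with $A_h(u_h, w_h) - A_h(u, w_h) = A_h(u_h,w_h) - L(w_h) - (\text{consistency error}) = -(\text{nonconformity consistency error})$, since $A_h(u_h,w_h) = L(w_h)$ by \eqref{FEM}. The leftover consistency error is exactly the jump term $\sum_{F\in\mathcal{F}_i}\langle \partial_n u, \jump{w_h}\rangle_F$.

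To bound this nonconformity term I would exploit that $\int_F \jump{w_h}\,\mathrm{d}s = 0$ for interior faces, so one may subtract a face-wise (or patch-wise) constant from $\partial_n u$ and then use a standard argument: $\sum_F \langle \partial_n u - c_F, \jump{w_h}\rangle_F \lesssim (\sum_F h_F \|\partial_n u - c_F\|_F^2)^{1/2}(\sum_F h_F^{-1}\|\jump{w_h}\|_F^2)^{1/2}$, where the second factor is $\lesssim \|\nabla w_h\|_h$ via \eqref{discrete_interp} and a trace inequality, and the first factor is $\lesssim h^{1/2}\|\partial_n(u - v_h)\|_{\mathcal{F}_i} + h\|f\|_{L^2_\infty(\Omega)}$ after writing $\partial_n u = \partial_n(u - v_h) + \partial_n v_h$, treating the $v_h$-part by approximation and using $-\Delta u = f$ to control the remaining consistency term near $\Gamma_C$ through the trace-type bound \eqref{eq:tracebound} and assumption \eqref{f_assump} (this is where $h \le t_C$ enters). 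I expect the main obstacle to be precisely this nonconformity/consistency estimate: one must carefully track that the broken divergence of $\nabla u$ contributes a volume term requiring the $L^2_\infty$ control of $f$ in the strip $\Omega_{t_C}$, rather than merely $\|f\|_{L^2(\Omega)}$, and one must verify that the chosen $w_h$ genuinely satisfies the zero-average jump condition on interior faces (which it does, being in $V_h$). Collecting all terms, dividing through by the error measure, and taking the infimum over $v_h \in V_h$ yields the claimed estimate; absorbing the $\gamma^{-1/2}$-weighted contact term and using $\gamma = \gamma_0 h$ to simplify constants completes the argument.
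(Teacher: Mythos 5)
Your high-level strategy---exploit Proposition \ref{positivity} with $u_1 = u_h$, $u_2 = v_h$, insert the exact solution, and split into a continuity piece and a nonconformity/consistency piece---is close in spirit to what the paper does, but there is a concrete quantitative gap that makes the estimate suboptimal as written.

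The gap is the claim that the test function $w_h = \xi_h + x_h(\xi_h)$ (with $\xi_h = u_h - v_h$ and $x_h$ from Lemma \ref{specfunc}, $\partial_n x_h = \gamma^{-1}\bar\xi_h$ on $\Gamma_C$) satisfies $\|w_h\|_{1,C} \lesssim \|\xi_h\|_{1,h}$. This is false uniformly in $h$. Already the broken $H^1$-part fails: from \eqref{funcstab} and an inverse inequality, $\|\nabla x_h\|_h \lesssim h^{1/2}\|r\|_{\Gamma_C}$ with $r = \gamma^{-1}\bar\xi_h$, so with $\gamma = \gamma_0 h$ one gets $\|\nabla x_h\|_h \lesssim \gamma_0^{-1} h^{-1/2}\|\xi_h\|_{\Gamma_C} \lesssim \gamma_0^{-1} h^{-1/2}\|\xi_h\|_{1,h}$. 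The boundary pieces of $\|\cdot\|_{1,C}$ are worse: $\gamma^{1/2}\|\partial_n x_h\|_{\Gamma_C} = \gamma^{-1/2}\|\bar\xi_h\|_{\Gamma_C} \sim h^{-1/2}\|\xi_h\|_{1,h}$, and even $\gamma^{-1/2}\|\xi_h\|_{\Gamma_C} \sim h^{-1/2}\|\xi_h\|_{1,h}$. Thus $\|w_h\|_{1,C} \sim h^{-1/2}\|\xi_h\|_{1,h}$, and invoking Lemma \ref{lem:continuityA} to bound $|A_h(u,w_h)-A_h(v_h,w_h)| \lesssim \|u-v_h\|_{1,C}\|w_h\|_{1,C}$ costs you an extra $h^{-1/2}$, ruining the optimal rate $h^{1/2+\nu}$. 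The same loss reappears when you Cauchy--Schwarz the nonconformity term: $\bigl(\sum_F h^{-1}\|\jump{w_h}\|_F^2\bigr)^{1/2}$ includes the $x_h$ contribution, which again scales like $\gamma_0^{-1} h^{-1/2}\|\xi_h\|_{1,h}$, so the product with $h^{1/2}\|\partial_n(u-v_h)\|_{\mathcal{F}_i}$ loses the $h^{1/2}$.

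What the paper does instead is never put $x_h$ into $\|\cdot\|_{1,C}$ or into a jump-penalty bound. The special function is designed so that $a(y_h,x_h)-\langle\partial_n y_h, x_h\rangle_{\Gamma_C}=0$ for all $y_h\in V_h$ and so that $a(u_h,x_h)-\langle\{\partial_n u_h\},\jump{x_h}\rangle_{\mathcal{F}}=0$; the $x_h$-part of the consistency error then reduces, by elementwise integration by parts on the smooth $u$, to the local volume term $(-\Delta u, x_h)_{\Omega_C} = (f,x_h)_{\Omega_C}$, which is where the assumption \eqref{f_assump}, the bound \eqref{eq:tracebound}, and the restriction $h\le t_C$ are actually used. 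This is what produces the $h\|f\|_{L^2_\infty(\Omega)}$ term at the right order. Your outline mentions ``using $-\Delta u = f$ to control the remaining consistency term near $\Gamma_C$'', which is the correct intuition, but after you subtract the face-wise constant $\partial_n v_h$ there is no $f$-dependent residue left in the first factor; the $f$-term only appears if you treat the $x_h$-contribution through the integration-by-parts identity rather than through the generic trace/Poincar\'e estimate. Without that separate treatment, your route delivers at best $\mathcal{O}(h^\nu)$ rather than $\mathcal{O}(h^{1/2+\nu})$.
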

\begin{proof}
Using the definition of the form
$a(\cdot,\cdot)$ we have
\begin{equation}\label{coercivity1}
 \|\nabla e\|_{h}^2 \leq a(e,e)  =
a(e,u-v_h) + a(e,v_h-u_h).
\end{equation}
For the first term we have
\begin{equation}\label{splita1}
a(e,u-v_h) \leq \frac{1}{2} \|\nabla e\|_{h}^2 +
\frac{1}{2} \|\nabla (u - v_h)\|_{h}^2.
\end{equation}
Considering the second term we see that
\begin{align}\nonumber
a(e,v_h-u_h) = {}& \left<\{\partial_n u\},\jump{v_h-u_h}
\right>_{\mathcal{F}\setminus \mathcal{F}_{\Gamma_C}}
+\left<\partial_n e, v_h-u_h\right>_{\Gamma_C} \\ \nonumber
& - \left<\partial_n (v_h-u_h), e\right>_{\Gamma_C} \\ \label{0st_step}
& - \left<[P_\gamma u]_+ - [P_\gamma u_h]_+, \partial_n
  (v_h-u_h) \right>_{\Gamma_C}
\end{align}
Using that
\begin{align*}
\left<\partial_n e, v_h-u_h\right>_{\Gamma_C} 
- \left<\partial_n (v_h-u_h), e\right>_{\Gamma_C} 
= 
\left<\partial_n e, v_h-u\right>_{\Gamma_C}  - \left<\partial_n (v_h-u), e\right>_{\Gamma_C}
\end{align*}
and
\begin{align*}
\left<[P_\gamma u]_+ - [P_\gamma u_h]_+, \partial_n
  (v_h-u_h) \right>_{\Gamma_C} = {}& \left<[P_\gamma u]_+ - [P_\gamma u_h]_+, \partial_n
  (v_h-u) \right>_{\Gamma_C} \\ & + \left<[P_\gamma u]_+ - [P_\gamma u_h]_+, \partial_n
  e \right>_{\Gamma_C}\\
= {}&\left<[P_\gamma u]_+ - [P_\gamma u_h]_+, \partial_n
  (v_h-u) \right>_{\Gamma_C}\\ & + \gamma^{-1} \left<[P_\gamma u]_+ - [P_\gamma u_h]_+, 
  P_\gamma e \right>_{\Gamma_C} \\
& + \gamma^{-1}\left<[P_\gamma u]_+ - [P_\gamma u_h]_+, 
 e \right>_{\Gamma_C}
\end{align*}
we arrive at the identity
\begin{align} \nonumber
a(e,v_h-u_h) = {}&\left<\{\partial_n u\},\jump{v_h-u_h} \right>_{\mathcal{F}\setminus \mathcal{F}_{\Gamma_C}}+
\left<\partial_n e, v_h-u\right>_{\Gamma_C} 
\\ \nonumber & - \left<\partial_n (v_h-u),e+ ([P_\gamma u]_+ - [P_\gamma u_h]_+)\right>_{\Gamma_C} \\ \nonumber &
-  \gamma^{-1} \left<[P_\gamma u]_+ - [P_\gamma u_h]_+, 
  P_\gamma (u-u_h) \right>_{\Gamma_C} \\ 
& - \gamma^{-1}\left<e,[P_\gamma u]_+ - [P_\gamma u_h]_+\right>_{\Gamma_C}.\label{1st_step}
\end{align}
%
Observe now that the following relation holds using monotonicity and the
elementary relation $a^2+b^2+2ab= (a+b)^2$, with $a=\gamma^{-1/2}
(u-u_h)$ and $b=\gamma^{-1/2}([P_\gamma u]_+ - [P_\gamma u_h]_+)$,
\[\begin{array}{c}
- \gamma^{-1} \|e\|_{\Gamma_C}^2 - \gamma^{-1} \left<[P_\gamma u]_+ - [P_\gamma u_h]_+, P_\gamma e\right>_{\Gamma_C} - \gamma^{-1}\left<[P_\gamma u]_+ - [P_\gamma u_h]_+, 
2 e \right>_{\Gamma_C} \\[3mm]
\leq - \gamma^{-1} \|e\|_{\Gamma_C}^2
 - \gamma^{-1} \|[P_\gamma u]_+ - [P_\gamma u_h]_+\|_{\Gamma_C}^2 -
 \gamma^{-1} \left<[P_\gamma u]_+ - [P_\gamma u_h]_+, 
2 e \right>_{\Gamma_C} \\[3mm]
\leq -\|\gamma^{-\frac12}( [P_\gamma u_h]_++ u_h)\|_{\Gamma_C}^2.
\end{array}\]
We deduce the following bound
\begin{align}\nonumber
a(e,v_h-u_h) \leq  {}& \left<\{\partial_n u\},\jump{v_h-u_h} \right>_{\mathcal{F}\setminus \mathcal{F}_{\Gamma_C}}+
\left<\partial_n e, v_h-u\right>_{\Gamma_C} \\ \nonumber &
- \left<\partial_n (v_h-u), e+ ([P_\gamma u]_+ - [P_\gamma u_h]_+)\right>_{\Gamma_C} \\ \nonumber &
  -\|\gamma^{-\frac12}( [P_\gamma u_h]_++ u_h)\|_{\Gamma_C}^2
+  \gamma^{-1}\left<e,[P_\gamma u]_+ - [P_\gamma u_h]_+\right>_{\Gamma_C} \\ & +\gamma^{-1} \|e\|_{\Gamma_C}^2.\label{1st_step}
\end{align}
Choosing now $x_h \in V_h$ as in Lemma \ref{specfunc},
but with $\partial_n x_h\vert_F = \gamma^{-1} (\bar u - \bar
u_h)\vert_F= \gamma^{-1} \bar e\vert_F$ on faces $F \subset \Gamma_C$ we
obtain
\[
a_h(e,x_h) -\left<\{\partial_n u\},\jump{x_h} \right>_{\mathcal{F}}+
\gamma^{-1}\| \bar e\|_{\Gamma_C}^2 
+
\left<[P_\gamma u]_+ - [P_\gamma u_h]_+, \gamma^{-1} \bar e\right>_{\Gamma_C} = 0.
\]
Note that using orthogonality on the faces 
\eqref{eq:condonu} we have
\[
\gamma^{-1}\| \bar e\|_{\Gamma_C}^2 = \gamma^{-1}\| e\|_{\Gamma_C}^2 - \gamma^{-1}\| \bar e -e\|_{\Gamma_C}^2
\]
and once again using orthogonality and also the contact condition
\begin{align*}
\left<[P_\gamma u]_+ - [P_\gamma u_h]_+, \gamma^{-1} \bar e\right>_{\Gamma_C} 
= {}& \left<[P_\gamma u]_+ - [P_\gamma u_h]_++ e, \gamma^{-1} e\right>_{\Gamma_C}\\
& -\left< [P_\gamma u_h]_++ 
  u_h, \gamma^{-1} \bar e-\gamma^{-1} e\right>_{\Gamma_C} \\
& - \gamma^{-1} \|\bar e- e\|_{\Gamma_C}^2.
\end{align*}
For the last term in the right hand side we may add and subtract
$v_h-\bar v_h$ and use the triangle inequality followed by the
interpolation properties of the projection onto piecewise constants
and a trace inequality to obtain
\begin{multline}\label{tracebound}
\gamma^{-1} \|\bar e -e\|_{\Gamma_C}^2 \leq C (\gamma^{-1} \|u - v_h\|_{\Gamma_C}^2 +
  \gamma^{-1} h^{-1} h^2 \|\nabla (v_h - u_h)\|_{h}^2) \\
\leq C (\|u - v_h\|_{1,C}^2+ \gamma^{-1} h^{-1} h^2 \|\nabla e\|_{h}^2)
\end{multline}
As a consequence
\begin{align}\nonumber
\gamma^{-1}\| e\|_{\Gamma_C}^2 +
\left<[P_\gamma u]_+ - [P_\gamma u_h]_+, \gamma^{-1} e\right>_{\Gamma_C}  
\leq {}& \frac14\|\gamma^{-\frac12}(
[P_\gamma u_h]_++ u_h)\|_{\Gamma_C}^2 \\ \nonumber &
+ C (\|u - v_h\|_{1,C}^2+ \gamma^{-1} h^{-1} h^2 \|\nabla e\|_{h}^2)\\ 
& - a_h(e,x_h)+\left<\{\partial_n u\},\jump{x_h} \right>_{\mathcal{F}}.\label{infsup_bound}
\end{align}
Collecting the results of equations \eqref{coercivity1}, \eqref{splita1},
\eqref{1st_step} and \eqref{infsup_bound} and applying the Poincar\'e inequality
\eqref{poincare} leads to
\begin{align}\nonumber
\alpha \left(\frac{1}{2} -   C \frac{h}{\gamma} \right) \|e\|_{1,h}^2 + \frac{1}{2\gamma} \| ([P_\gamma u_h]_++
u_h)\|_{\Gamma_C}^2 \leq {}& - a(e,x_h) +
\left<\partial_n e, v_h-u\right>_{\Gamma_C} \\ \nonumber
& 
- \left<  [P_\gamma u_h]_++ u_h, \partial_n (v_h -u)\right>_{\Gamma_C}\\ \nonumber & +\left<\{\partial_n u\},\jump{v_h-u_h} \right>_{\mathcal{F}_i}\\ \nonumber & +\left<\{\partial_n e\},\jump{x_h} \right>_{\mathcal{F}}\\  &  +C \left(1+\frac{h}{\gamma}\right)\|u - v_h\|_{1,\Gamma_C}^2.\label{eq:bound1}
\end{align}
Observe that $a(u_h,x_h) - \left<\{\partial_n u_h\},\jump{x_h}
\right>_{\mathcal{F}} = 0$ using integration by parts and the
construction of $x_h$. Then, once again by integration by parts, we have
\[
a(e,x_h) - \left<\{\partial_n e\},\jump{x_h} \right>_{\mathcal{F}}
 = (-\Delta u, x_h)_{\Omega_C} \leq \|\Delta u\|_{\Omega_C} \|x_h\|_{\Omega_C},
\]
where $\Omega_C$ is the set of elements with one face on
$\Gamma_C$. Let $h_C>0$ be the largest value such that
$\partial_{h_C} \Omega \cap \Omega_C \ne \emptyset$ and assume that
$h_C \leq t_C$.
Observe that by the construction of $x_h$ and adding and subtracting $v_h$ there holds
\begin{align*}
\|x_h\|_{\Omega_C} \lesssim h^{\frac12} h \gamma^{-1} \|\bar e\|_{\Gamma_C} \lesssim {}& h^{\frac12} h \gamma^{-1} \| e\|_{\Gamma_C} \\
\lesssim {}&
h^{\frac12} (h \gamma^{-1})(\|u-v_h\|_{\Gamma_C} + \| u_h - v_h\|_{\Gamma_C}).
\end{align*}
Let $w_h = u_h - v_h$, then by adding and subtracting $I_\text{cf}
w_h$ and applying the local trace inequality \eqref{trace} and the
standard global trace inequality for functions in $H^1(\Omega)$ we obtain
\begin{align*}
\|w_h\|_{\Gamma_C} \leq {}& \|w_h - I_\text{cf} w_h\|_{\Gamma_C} +
\|I_\text{cf} w_h\|_{\Gamma_C}\\
\lesssim {}&  h^{-\frac12} \|w_h - I_\text{cf} w_h\|_{h}+ 
\|w_h - I_\text{cf} w_h\|_{1,h} + \|w_h\|_{1,h}.
\end{align*}
Applying the discrete interpolation estimate \eqref{discrete_interp},
we then have
\[
\|w_h\|_{\Gamma_C} \lesssim \| w_h\|_{1,h}
\]
from which it follows that
\[
(h \gamma^{-1})  \| u_h - v_h\|_{\Gamma_C} \lesssim (h \gamma^{-1}) (\|e\|_{1,h}+\|u-v_h\|_{1,h}).
\]
For the factor $\|\Delta u\|_{\Omega_C}$ we use \eqref{eq:tracebound} to obtain the bound
\[
\|\Delta u\|_{\Omega_C} \lesssim h^{\frac12}\sup_{0\leq t \leq h_C}
\|\Delta u\|_{\partial_{t} \Omega} \leq h^{\frac12}  \|f\|_{L^2_\infty(\Omega)}.
\]
It follows that 
\begin{equation}\label{eq:bound2}
a_h(e,x_h) - \left<\{\partial_n e\},\jump{x_h}
\right>_{\mathcal{F}} \lesssim h \|f\|_{L^2_\infty(\Omega)}(h \gamma^{-1}) (\|e\|_{1,h}+\|u-v_h\|_{1,h}).
\end{equation}
For the remaining terms of \eqref{eq:bound1} we have by first adding and
subtracting $v_h$ and using the mean
value property of the space $V_h$ and then applying the
Cauchy--Schwarz inequality followed by the arithmetic--geometric inequality,
\begin{equation}
\begin{array}{c}
\left<\partial_n e, v_h-u\right>_{\Gamma_C} 
-\left<  [P_\gamma u_h]_++ u_h, \partial_n (v_h
  -u)\right>_{\Gamma_C} +\left<\{\partial_n u\},\jump{v_h-u_h}
\right>_{\mathcal{F}} \\[3mm]
=\left<\partial_n (u-v_h), v_h-u\right>_{\Gamma_C} + \left<\partial_n (v_h-u_h), v_h-u\right>_{\Gamma_C}\\
- \left<  [P_\gamma u_h]_++ u_h, \partial_n (v_h 
  -u)\right>_{\Gamma_C}\\[3mm]
+\left<\{\partial_n (u - v_h)\},\jump{v_h -u_h}
\right>_{\mathcal{F}_i} \\[3mm]
\leq C \varepsilon^{-1} \|u -v_h\|_{1,C}^2  + \gamma \|\partial_n (u -v_h)\|_{\mathcal{F}_i}^2
+ \frac14 \gamma^{-1}\|[P_\gamma
u_h]_++ u_h\|_{\Gamma_C}^2 \\[3mm]
+ \varepsilon (\gamma
\|\partial_n (v_h-u_h) \|^2_{\Gamma_C}+\gamma^{-1} \|\jump{v_h-u_h}\|_{\mathcal{F}_i}^2).
\end{array}
\end{equation}
Using the zero average property of the nonconforming space,
elementwise trace inequalities and a triangular inequality we obtain
\begin{align*}
\gamma \|\partial_n (v_h-u_h)
\|^2_{\Gamma_C}+\gamma^{-1} \|\jump{v_h-u_h}\|_{\mathcal{F}_i}^2 \leq {}& C_{\gamma_0}
\|v_h-u_h\|^2_{1,h} \\
\leq {}& 2 C_{\gamma_0} (\|e\|^2_{1,h}+\|v_h-u\|^2_{1,h}).
\end{align*}
Observe that $C_{\gamma_0}$ is constant for $\gamma_0 = \gamma/h$ fixed, but it
can not be made small by choosing $\gamma_0$ large (ore small). Instead we choose 
$\varepsilon< \alpha/(16 C_{\gamma_0})$ to obtain the bound
\begin{multline}\label{eq:bound3}
\left<\partial_n e, v_h-u\right>_{\Gamma_C} 
- \left<  [P_\gamma u_h]_++ u_h, \partial_n (v_h
  -u)\right>_{\Gamma_C} +\left<\{\partial_n u\},\jump{v_h-u_h}
\right>_{\mathcal{F}} \\
\leq  C \|u -v_h\|_{1,C}^2  + \gamma \|\partial_n (u -v_h)\|_{\mathcal{F}_i}^2+ \frac{1}{4\gamma} \|([P_\gamma
u_h]_++ u_h)\|_{\Gamma_C}^2+ \frac{\alpha}{8} \|e\|_{1,h}^2.
\end{multline}
Collecting the above bounds \eqref{eq:bound1}, \eqref{eq:bound2} and
\eqref{eq:bound3}, choosing $h \gamma^{-1}$ and $\varepsilon$
small enough (i.e. $\gamma_0$ large enough) we conclude that for all $v_h \in V_h$
\begin{align*}
\alpha^{\frac12} \|e\|_{1,h} + \gamma^{-\frac12}\| [P_\gamma
u_h]_++  u_h\|_{\Gamma_C} 
\lesssim {}& (\|u - v_h\|_{1,C} + h^{\frac12}
\|\partial_n (u -v_h)\|_{\mathcal{F}_i}) \\
& + \frac{h}{\alpha^{\frac12}} \|f\|_{L^2_\infty(\Omega)}.
\end{align*}
\end{proof}
\begin{corollary}
Under the assumptions of Theorem \ref{thm:apriori} there holds
\begin{equation}\label{estimate_2}
\alpha^{\frac12} \|e\|_{1,h} + \gamma^{-\frac12}\| [P_\gamma
u_h]_++  u_h\|_{\Gamma_C} 
\lesssim h^{\frac12+\nu} \|u\|_{H^{\frac32+\nu}(\Omega)}
+ \frac{h}{\alpha^{\frac12}} \|f\|_{L^2_\infty(\Omega)}.
\end{equation}
\end{corollary}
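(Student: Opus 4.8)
The plan is to estimate the right-hand side of Theorem~\ref{thm:apriori} by a single well-chosen interpolant and then invoke standard fractional-order approximation results. I would take $v_h = I_h u \in V_h$, where $I_h$ is the nonconforming (Crouzeix--Raviart) interpolant defined by matching the mean value of $u$ on each face; since $u = 0$ on $\Gamma_D$ one has $\int_F \jump{I_h u}\,\mathrm{d}s = 0$ on all faces in $\mathcal{F}_i \cup \mathcal{F}_{\Gamma_D}$, so $I_h u$ is an admissible competitor. The assumed regularity $u \in H^{\frac32+\nu}(\Omega)$ with $\nu > 0$ gives $\nabla u \in H^{\frac12+\nu}(\Omega)^d$, which has a well-defined $L^2$ trace on every mesh face; hence all the face terms below are meaningful, and on each element $\kappa$ (with shape-regular patch $\omega_\kappa$) the estimates
\[
\|u - I_h u\|_{L^2(\kappa)} \lesssim h_\kappa^{\frac32+\nu}\|u\|_{H^{\frac32+\nu}(\omega_\kappa)}, \qquad
\|\nabla(u - I_h u)\|_{L^2(\kappa)} \lesssim h_\kappa^{\frac12+\nu}\|u\|_{H^{\frac32+\nu}(\omega_\kappa)}
\]
hold, together with the face version $\|\partial_n(u - I_h u)\|_{L^2(F)} \lesssim h_\kappa^{\nu}\|u\|_{H^{\frac32+\nu}(\omega_\kappa)}$ for $F \subset \partial\kappa$.

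With these at hand, each of the four contributions to the infimum is controlled by summing over elements and using bounded overlap of the patches $\omega_\kappa$. First, $\|u - I_h u\|_{1,h} \lesssim h^{\frac12+\nu}\|u\|_{H^{\frac32+\nu}(\Omega)}$. Second, since $\gamma = \gamma_0 h$, the weighted flux term satisfies $\gamma^{\frac12}\|\partial_n(u - I_h u)\|_{h,\Gamma_C} \lesssim h^{\frac12}\,h^{\nu}\|u\|_{H^{\frac32+\nu}(\Omega)}$. Third, by the local trace inequality \eqref{trace0}, $\|u - I_h u\|_{L^2(F)} \lesssim h_\kappa^{-\frac12}\|u - I_h u\|_{L^2(\kappa)} + h_\kappa^{\frac12}\|\nabla(u - I_h u)\|_{L^2(\kappa)} \lesssim h_\kappa^{1+\nu}\|u\|_{H^{\frac32+\nu}(\omega_\kappa)}$, hence $\gamma^{-\frac12}\|u - I_h u\|_{h,\Gamma_C} \lesssim h^{-\frac12}\,h^{1+\nu}\|u\|_{H^{\frac32+\nu}(\Omega)} = h^{\frac12+\nu}\|u\|_{H^{\frac32+\nu}(\Omega)}$. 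Fourth, $h^{\frac12}\|\partial_n(u - I_h u)\|_{\mathcal{F}_i} \lesssim h^{\frac12}\,h^{\nu}\|u\|_{H^{\frac32+\nu}(\Omega)}$. Adding the four bounds gives $\inf_{v_h \in V_h}(\|u - v_h\|_{1,C} + h^{\frac12}\|\partial_n(u - v_h)\|_{\mathcal{F}_i}) \lesssim h^{\frac12+\nu}\|u\|_{H^{\frac32+\nu}(\Omega)}$, and inserting this into Theorem~\ref{thm:apriori} yields \eqref{estimate_2}, the source contribution $\tfrac{h}{\alpha^{1/2}}\|f\|_{L^2_\infty(\Omega)}$ being carried over verbatim.

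The only genuinely nonroutine ingredient — and the step I expect to require care — is the fractional face estimate $\|\partial_n(u - I_h u)\|_{L^2(F)} \lesssim h_\kappa^{\nu}\|u\|_{H^{\frac32+\nu}(\omega_\kappa)}$: it uses that $\nabla u$ admits a trace (legitimate since $\tfrac12 + \nu > \tfrac12$) together with a Bramble--Hilbert/scaling argument in the fractional space $H^{\frac12+\nu}$ applied to $\nabla u - \nabla I_h u$ on the reference element, combined with the trace embedding $L^2(\hat F) \hookrightarrow H^{\frac12+\nu}(\hat\kappa)$. For $\nu = \tfrac12$ this reduces to the classical $H^2$ case; for $0 < \nu < \tfrac12$ it is the standard estimate obtained by interpolating between the $H^1$ and $H^2$ approximation bounds. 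All remaining steps are mechanical assembly of local estimates, so no further difficulty is anticipated.
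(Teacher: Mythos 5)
Your proof is correct and establishes the corollary, but it takes a slightly different route from the paper. The paper simply observes that the conforming $\bbP_1$ Lagrange space is contained in the Crouzeix--Raviart space $V_h$, and therefore takes $v_h = i_h u$ (the standard Lagrange interpolant) and cites the fractional-order interpolation estimate $\|u - i_h u\|_{1,C} + h^{1/2}\|\partial_n(u - i_h u)\|_{\mathcal{F}_i} \lesssim h^{1/2+\nu}\|u\|_{H^{3/2+\nu}(\Omega)}$ directly from the literature (Dupont--Scott and Chouly--Hild--Renard). You instead work with the genuine nonconforming Crouzeix--Raviart interpolant $I_h u$ defined by face-mean matching, verify its admissibility, and rebuild the estimate from local Bramble--Hilbert arguments in fractional Sobolev spaces. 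Both are legitimate; the paper's choice is shorter because the needed Lagrange estimates are off-the-shelf, while your approach is more self-contained at the price of having to justify the fractional-order face estimate $\|\partial_n(u - I_h u)\|_{L^2(F)} \lesssim h_\kappa^{\nu}\|u\|_{H^{3/2+\nu}}$ for the CR interpolant — precisely the ingredient you correctly flagged as needing care. One cosmetic remark: since the CR interpolant is fully local to $\kappa$ (it depends only on face averages of $u$ on $\partial\kappa$), the patches $\omega_\kappa$ are unnecessary and the local estimates can be stated on $\kappa$ alone.
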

\begin{proof}
This is immediate from the best approximation result of Theorem
\ref{thm:apriori} and the existence of an optimal approximation of $u$
in $V_h$. Since the Crouzeix-Raviart space contains the
$H^1$-conforming space of piecewise affine functions we may take the
standard Lagrange interpolant $i_h u$ for which there holds (see \cite{DS80,CHR15}).
\[
\|u - i_h u\|_{1,C} + h^{\frac12}
\|\partial_n (u -i_h u)\|_{\mathcal{F}_i}\lesssim h^{\frac12+\nu} \|u\|_{H^{\frac32+\nu}(\Omega)}.
\]
\end{proof}
\section{Numerical example}
Here we will consider two examples on the unit square,
$\Omega=[0,1]^2$. We have used the package FreeFEM++ for the
computations \cite{He12}. We let $\Gamma_D=[0,1]\times\{1\}$,
$\Gamma_N=\{0\}\times [0,1] \cup  \{1\}\times [0,1]$ and $\Gamma_C
=[0,1]\times\{0\}$. In all cases we use a fixed point iteration to
compute the solution and we iterate until the relative $H^1$-error of
the increment if smaller than $10^{-5}$.

In the graphics below the $H^1$-error is marked with squares,
the $L^2$-error with circles and finally the residual quantity $\|u_h +
[P_\gamma(u_h)]_+\|_{\Gamma_C}$ by triangles. Dotted lines are
reference lines with slopes $O(h)$ (upper)  and $O(h^2)$ (lower).

\subsection{Problem with known solution}\label{sec:known}
We first consider an example where the exact solution is known,
\[
u(x,y) := -\cos(\pi/2\,y)\,\sin ^2 (\pi\,x)
\]
with the right hand side
\[
f=\frac{\pi^2}{4}\cos(\pi/2 y)\sin ^2 (\pi x)-2 \pi^2\cos(\pi/2\, y)\cos(2\pi\,x).
\]
Observe that this actually is a solution to a linear Neumann problem,
but we can still use it as a solution to the nonlinear problem. The
contact takes place in the set $\{0,1\}$. We
solve it on a sequence of Union Jack style meshes (see the left plot
of Figure \ref{mesh_solution} for an example) with $h/\sqrt{2} \in \{ 2^{-(i+4)}
\}_{i=0}^{3}$. The result is presented in the left
graphic of Figure \ref{fig:exa2}. As expected we observe first order
convergence of the relative $H^1$-error and second order
convergence of the relative  $L^2$-error. As expected the residual quantity has
a convergence close to $\mathcal{O}(h^{\frac32})$. 
\subsection{Problem with unknown solution}\label{sec:unknown}
Here we propose the problem obtained by setting
\begin{equation}\label{sourceterm}
f= ( 2 \pi N)^2 \cos(2 \pi N x), N \in \{3,5\}.
\end{equation}
We solve the problem on a mesh with $h=2\sqrt{2} \cdot 10^{-3}$ (a
$500\times 500$ mesh), using the
nonsymmetric Nitsche method from \cite{CHR15} and piecewise quadratic
conforming approxmation to obtain a reference
solution. We report the contour lines of the solution for $N=5$ in the
right plot of Figure \ref{mesh_solution}. Then we solve the problem for $h/\sqrt{2} \in \{ 2^{-(i+4)}
\}_{i=0}^{4}$ and compute the same quantities as in
the previous case. The convergences are reported in the right
graphic of Figure \ref{fig:exa2}. The cases $N=3$ and $N=5$ are
                                 distinguished by the use of white
                                 and black markers respectively,
                                 similar convergence orders were
                                 observed in both cases. 
First order convergence is observed for the error in the $H^1$-norm
and second order convergence in the $L^2$--error. As before the
convergence of $\|u_h +
[P_\gamma(u_h)]_+\|_{\Gamma_C}$ is approximately
$\mathcal{O}(h^{\frac32})$. 
\begin{figure}
\begin{center}
\hspace{-1cm}
\includegraphics[angle=90,height=6cm]{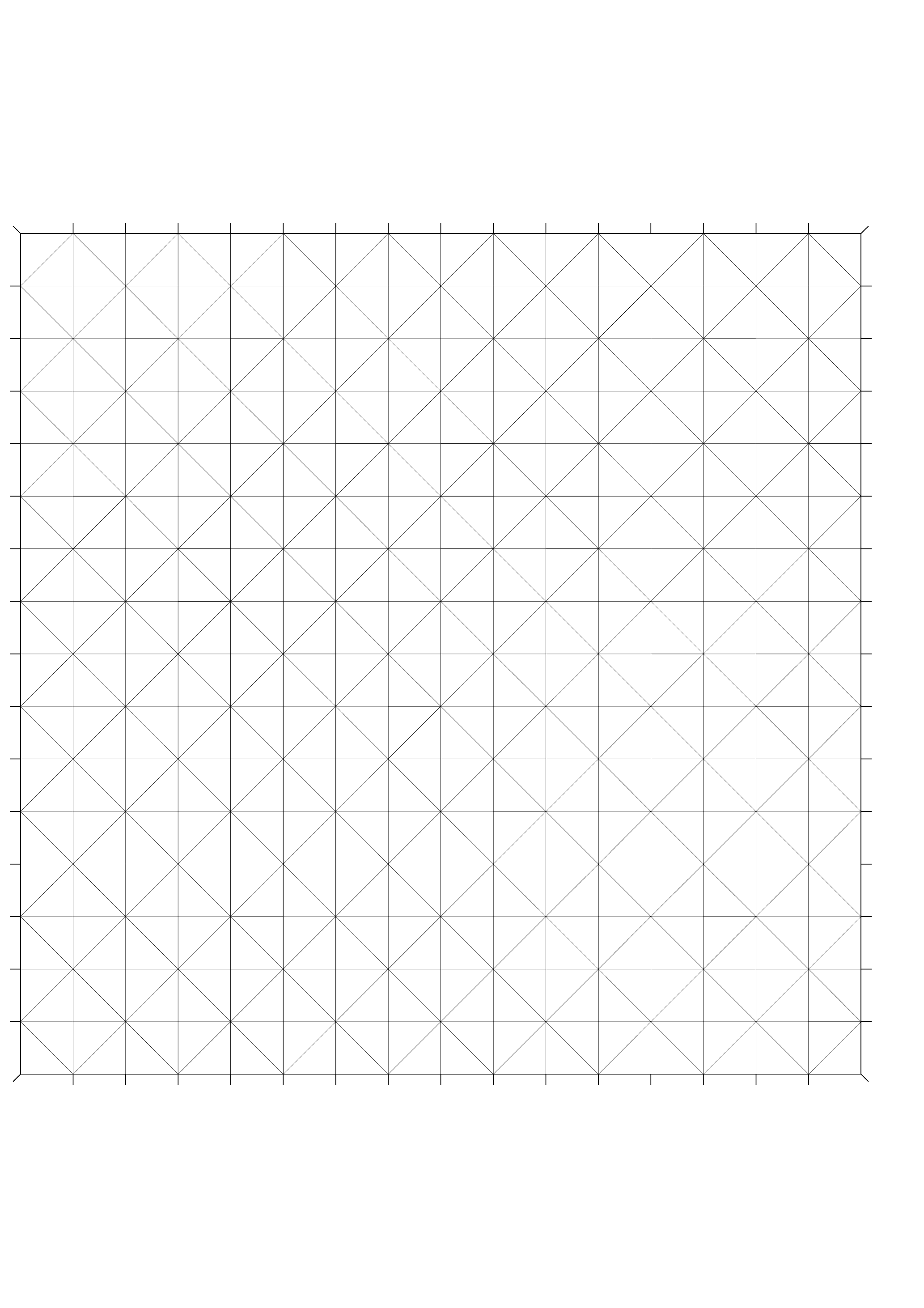}\hspace{-2cm}\includegraphics[trim=0
60 0 60,clip,angle=90,height=6cm
]{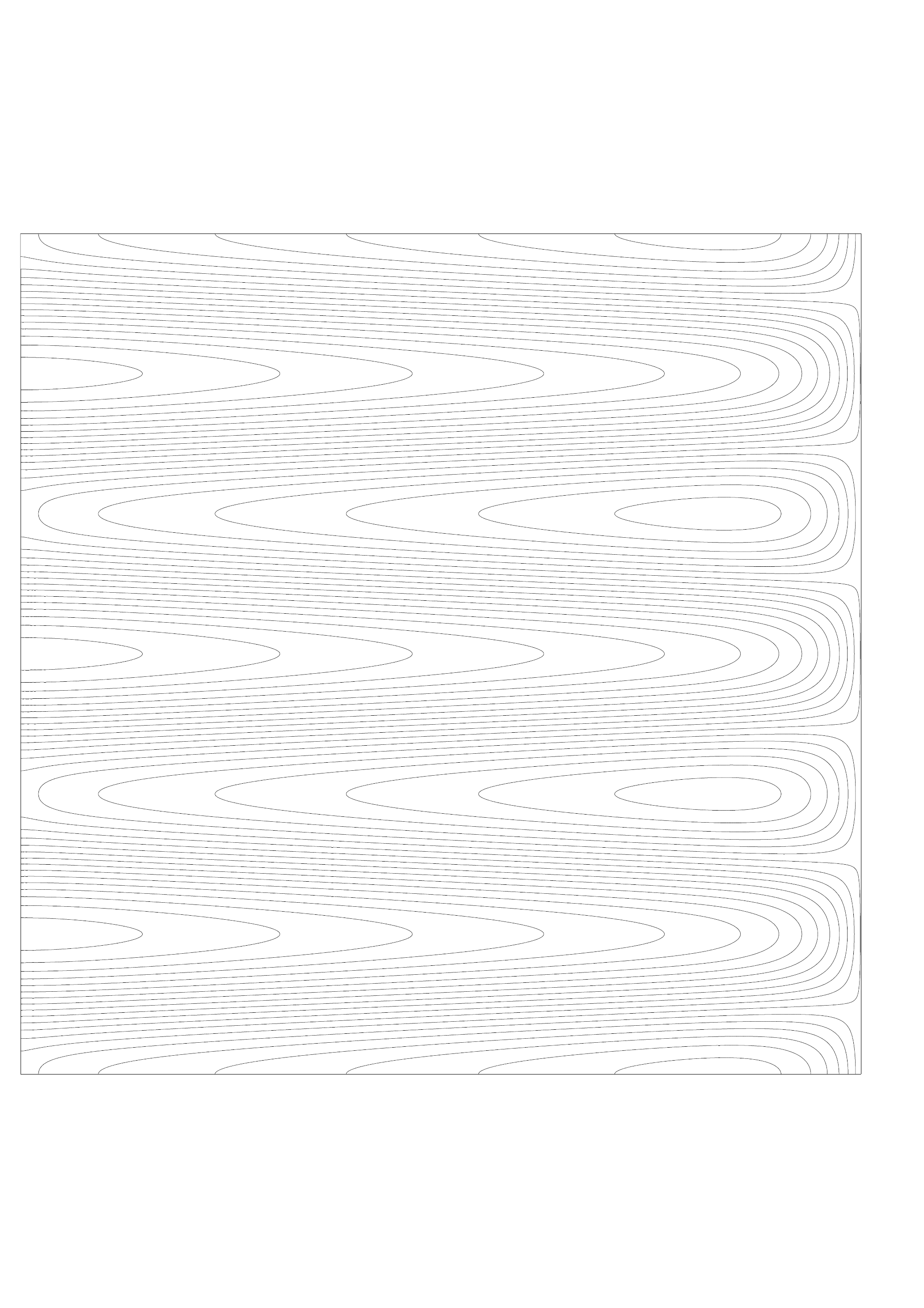}
\caption{Left: example of a computational mesh. Right: the fine mesh solution
  using \eqref{sourceterm} with $N=5$.}\label{mesh_solution}
\end{center}
\end{figure}
\begin{figure}
\begin{center}
\includegraphics[height=6cm]{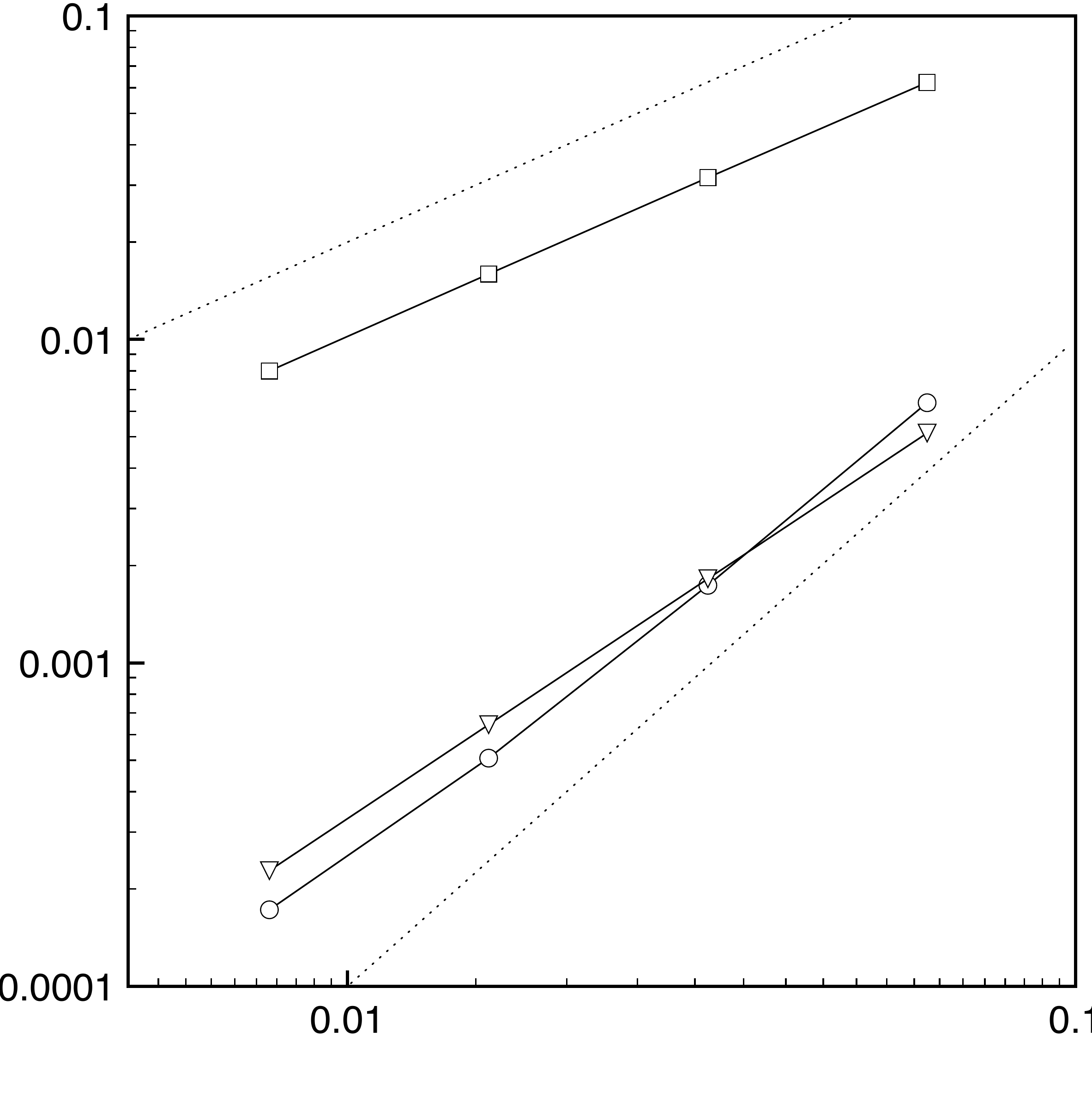}\hspace{0.5cm}\includegraphics[height=6cm]{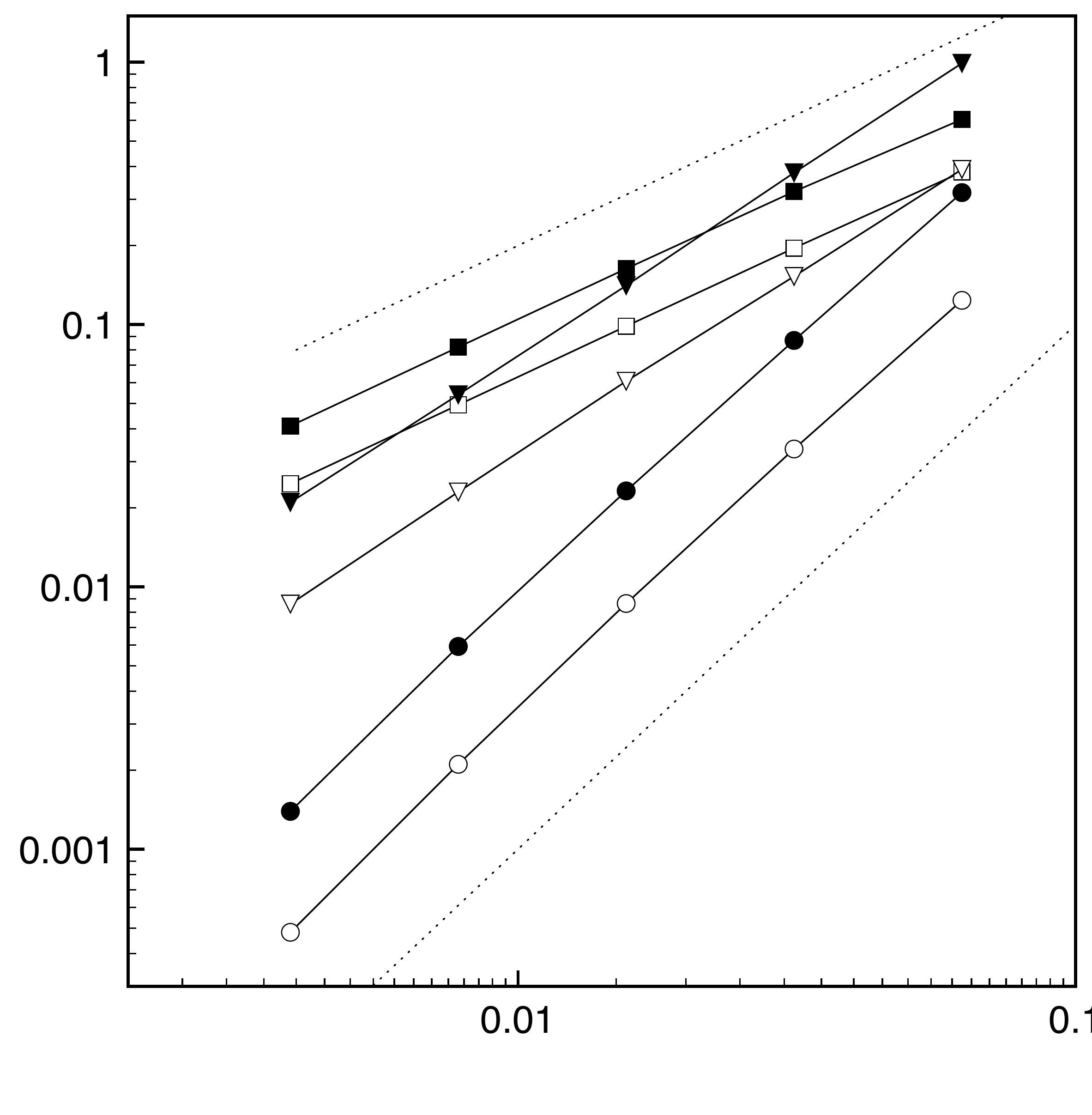}
\caption{Convergence plots of the two numerical examples. Left the
  problem from Section \ref{sec:known}. Right from Section \ref{sec:unknown}. Dotted lines
are reference curves. Upper $O(h)$, lower $O(h^2)$. Square markers -
$H^1$-error; circle markers - $L^2$--error; triangle markers -
satisfaction of the contact condition,
$\|u_h+[P_\gamma(u_h)]_+\|_{\Gamma_C}$. In the right plot, white
markes indicate $N=3$ and black markers $N=5$.}\label{fig:exa2}
\end{center}
\end{figure}
\section{Conclusion}
We have proved that the nonsymmetric Nitsche method of \cite{Bu12} may
be applied in the framework of \cite{CH13b,CHR15} for the approximation
of unilateral contact problems. An optimal error estimate for a method
using a nonconforming finite element space was derived
combining tools from the inf-sup analysis of \cite{Bu12}
with the monotonicity argument of \cite{CH13b,CHR15}. The theoretical
results were illustrated in two numerical examples. Herein we only
considered the simplified case of the Signorini problem based on
Poisson's equation, but the extention to elasticity may be feasible
using the results from \cite{BB16}. Another natural question is if the
above analysis can be extended to the case of standard conforming
elements. The difficulty here is to handle the non-local character of
the function necessary for the stability argument, adding a layer of
terms that must be estimated. Numerical
experiments not reported here indicate that the conforming method also
performs well.
\bibliographystyle{plain}
\bibliography{contact}

\end{document}